\crefname{subsection}{subsection}{subsections}
\theoremstyle{definition}
\newtheorem{theorem}{Theorem}[section]
\newtheorem*{theorem*}{Theorem}
\newtheorem{lemma}[theorem]{Lemma}
\newtheorem{proposition}[theorem]{Proposition}
\newtheorem{corollary}[theorem]{Corollary}
\newtheorem{example}[theorem]{Example}
\newtheorem{definition}[theorem]{Definition}
\newtheorem{remark}[theorem]{Remark}
\newtheorem*{question*}{Question}
\newtheorem*{remark*}{Remark}
\DeclareMathOperator{\lcm}{lcm}
\DeclareMathOperator{\LCLM}{LCLM}
\DeclareMathOperator{\Frac}{Frac}
\DeclareMathOperator{\SL}{SL}
\DeclareMathOperator{\GL}{GL}
\DeclareMathOperator{\PSL}{PSL}
\DeclareMathOperator{\Gal}{Gal}
\DeclareMathOperator{\Hom}{Hom}
\DeclareMathOperator{\spann}{span}
\DeclareMathOperator{\End}{End}
\DeclareMathOperator{\round}{round}
\DeclareMathOperator{\BesselK}{BesselK}
\DeclareMathOperator{\BesselI}{BesselI}
\newcommand{\cs}{\mathbin{\circledS}}
\newcommand{\pda}{\mathord{\downarrow}}
\newcommand{\pua}{\mathord{\uparrow}}
\definecolor{c1}{RGB}{203, 75, 48}
\title{Solving order 3 difference equations}
\newcommand\footnoteref[1]{\protected@xdef\@thefnmark{\ref{#1}}\@footnotemark}
\patchcmd{\@setaddresses}{\indent}{\noindent}{}{}
\patchcmd{\@setaddresses}{\indent}{\noindent}{}{}
\patchcmd{\@setaddresses}{\indent}{\noindent}{}{}
\patchcmd{\@setaddresses}{\indent}{\noindent}{}{}
\author{Heba Bou KaedBey}
\author{Mark van Hoeij}
\author{Man Cheung Tsui}
\address{Department of Mathematics\\ Florida State University\\ Tallahassee, FL, USA}
\email{hb20@fsu.edu\\ hoeij@math.fsu.edu\\ manctsui@gmail.com}
\subjclass[2020]{12H10, 39A06}
\keywords{Difference Galois theory, difference algebra, linear recurrence equation}
\begin{document}

\begin{abstract}
    We classify order $3$ linear difference operators over $\mathbb{C}(x)$ that are solvable in terms of lower order difference operators. 
    To prove this result, we introduce the notion of absolute irreducibility for difference modules, and classify (for arbitrary order) modules that are irreducible but not absolutely irreducible.
\end{abstract}
\maketitle

\section{Introduction}
\label{section:introduction}

The paper \cite{hendricks1999solving} determined \emph{Liouvillian solutions}
of difference operator. Liouvillian sequences are generated by solutions of order $1$ operators
through repeated applications of difference ring operations, indefinite summation, and interlacing of sequences.
By defining \emph{$2$-expressible sequences} 
to be generated in this way from solutions of order $2$ operators, we can state our main result as follows.

\begin{theorem*}[\Cref{thm:general-classification} restated]
    Let $L$ be an order $3$ difference operator over $\mathbb{C}(x)$.
    If $L$ is \emph{$2$-solvable} (Definition~\ref{solv2}), 
    then at least one of the following holds.
    \begin{enumerate}[label=(\alph*)]
        \item $L$ admits a nontrivial factorization over $\mathbb{C}(x)$.
        \item $L$ is gauge equivalent to $\Phi^{3}+f$ for some $f\in \mathbb{C}(x)$.
        \item $L$ is gauge equivalent to $L_{2}^{\cs 2}\cs L_{1}$ for some operators $L_{1}$ and $L_{2}$ of orders $1$ and $2$, respectively, over $\mathbb{C}(x)$.
    \end{enumerate}
\end{theorem*}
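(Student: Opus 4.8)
The plan is to work with the difference module $M$ associated to $L$ and to split first on irreducibility and then on \emph{absolute} irreducibility. If $L$ admits a nontrivial right factor over $\mathbb{C}(x)$, we are immediately in case (a), so assume henceforth that $L$, equivalently $M$, is irreducible of rank $3$.

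Next I would invoke the classification (established earlier in the paper) of modules that are irreducible but not absolutely irreducible. Suppose $M$ is \emph{not} absolutely irreducible. By that classification $M$ is induced from a module of strictly smaller rank along the interlacing substructure (passing from $\Phi$ to $\Phi^{p}$); since $3$ is prime, the index of induction must be $3$ and the inducing module must have rank $1$. Thus $M$ is the interlacing of three order-$1$ modules. The companion module of $\Phi^{3}+f$ is precisely such an interlacing --- the residues modulo $3$ give three independent first-order threads $y(x+3)=-f(x)\,y(x)$ --- so I would identify $M$ with that of $\Phi^{3}+f$ up to gauge equivalence, landing in case (b). The content of this step is that prime rank forces the imprimitivity blocks to be lines and the inducing datum to be rank $1$.

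It remains to treat the absolutely irreducible case, which is the heart of the argument. Here I would pass to the difference Galois (Picard--Vessiot) group $G\subseteq\GL_3$. Absolute irreducibility rules out induction along the interlacing covers, so $G$ acts primitively on the standard representation. Now I invoke $2$-solvability: since $M$ is irreducible, the unipotent contributions arising from indefinite summation ($\Ga$-extensions) act trivially, so the image is reductive, and the characterization of $2$-solvability forces the standard $3$-dimensional representation of $G$ to be a constituent of tensor products of representations of dimension $\le 2$ and their duals. An absolutely irreducible, primitive $3$-dimensional representation arising in this way must be $\operatorname{Sym}^{2}$ of a $2$-dimensional representation, possibly tensored by a character. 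Translating back, $M\cong\operatorname{Sym}^{2}(N)\otimes D$ with $N$ of rank $2$ and $D$ of rank $1$; that is, $M$ is the module of $L_{2}^{\cs 2}\cs L_{1}$, which is case (c).

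The main obstacle is making this last step rigorous: proving that an absolutely irreducible, $2$-solvable, order-$3$ module is forced to be a symmetric square. Concretely this requires (i) a precise dictionary between $2$-solvability and the tannakian operations permitted on $G$, incorporating the difference-specific operations of interlacing (induction) and summation ($\Ga$-extensions), and (ii) ruling out the finite primitive subgroups of $\GL_3(\mathbb{C})$ that do not arise as $\operatorname{Sym}^{2}$ --- the images related to $A_6$, $\PSL_2(\mathbb{F}_7)$, and the Hessian groups --- together with the case where $G$ contains $\SL_3$. The finite subgroups of $\mathrm{SO}_3\cong\PSL_2$ are harmless, since their orthogonal $3$-dimensional representation is already a symmetric square and hence falls under (c); the genuine difficulty, exactly as in the differential analogue of Singer--Ulmer, is to show that the remaining exotic representations are not $2$-expressible.
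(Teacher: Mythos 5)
Your handling of the reducible case and of the irreducible-but-not-absolutely-irreducible case matches the paper: \Cref{thm:absolute-irreducibility} forces $M\cong N\pua^{1}_{3}$ with $\dim_F N=1$, whence gauge equivalence with $\Phi^{3}+f$. The gap is in the absolutely irreducible case, which you yourself identify as the heart of the theorem and then do not prove. Your argument rests on the assertion that $2$-solvability forces the standard $3$-dimensional representation of $G$ to be ``a constituent of tensor products of representations of dimension $\le 2$ and their duals.'' No such Tannakian characterization of $2$-solvability is established in this paper or available in the difference literature: it is precisely the difference analogue of the Nguyen--van der Put $d$-solvability machinery, which the introduction explicitly defers to future work (``we expect that an adaptation of \cite{nguyen2009d} and \cite{nguyen2010solving} will determine when difference modules are solvable in terms of lower-dimensional difference modules''). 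Deriving such a dictionary is nontrivial exactly because $2$-expressibility permits interlacing and indefinite summation, which do not correspond to tensor constructions. The paper takes a different and self-contained route: $2$-solvability gives an \emph{Eulerian} Galois group (\Cref{thm:eulerian-group}); \Cref{prop:facts}\eqref{fact-2} produces $t$ with $\Gal(M\pda^{1}_{t})=\Gal(M)^{\circ}$ connected, Eulerian, with total Picard--Vessiot \emph{field}; \Cref{lemma:first-classification} then invokes Singer's classification of connected Eulerian subgroups of $\SL_{3}(C)$ \cite[Proposition 4.1]{singer1985solving} to show $S^{2}(M\pda^{1}_{t})$ splits as irreducibles of dimensions $1$ and $5$; Clifford's theorem (\Cref{prop:clifford}) transfers this to a $1$-dimensional summand of $S^{2}(M)$ over $F$ itself; and the Symmetric Square Criterion (\Cref{prop:sym-square}) yields $M\cong S^{2}(N)\otimes P$. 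You supply no substitute for any of these steps.

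Moreover, your step (ii) misdiagnoses where the difficulty lies. The exotic finite primitive subgroups ($A_6$, $\PSL_2(\mathbb{F}_7)$, the Hessian groups) never need to be shown non-$2$-expressible: if $\Gal(M)$ is finite then $\Gal(M)^{\circ}=1$, so by \Cref{prop:facts}\eqref{fact-2} some $M\pda^{1}_{t}$ has trivial Galois group and is therefore reducible, contradicting absolute irreducibility --- such modules are automatically absorbed into your case (b) by \Cref{thm:absolute-irreducibility}. What genuinely must be excluded in the absolutely irreducible case is $\SL_{3}$ itself and the possibilities where $H^{\circ}$ stabilizes a line or plane in the solution space; the paper kills the former because $\SL_3(C)$ is not Eulerian (it does not appear in Singer's list), and the latter because they force $H^{\circ}$, hence $G$, to be solvable, contradicting absolute irreducibility via \Cref{prop:facts}\eqref{fact-1}. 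Since you neither prove your claimed characterization of $2$-solvability nor carry out a group-theoretic case analysis in its place, the absolutely irreducible case --- and with it case (c) of the theorem --- remains unproven in your proposal.
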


Each case of this classification can be solved algorithmically \cite{bou2024solving}.\footnote{Our paper \cite{bou2024solving} cites an older version of this paper. The references to Theorem 6.5, Corollary 6.6, and Proposition 6.8 there should be replaced by Theorem 6.4, Corollary 6.5, and Proposition B.2 here.}
In addition to classifying 2-solvable operators of order~3, we also develop tools needed to study higher order operators; see Sections \ref{section:restriction-induction}--\ref{section:main}.

The above theorem is the difference analog of what M. Singer \cite{singer1985solving} proved for differential operators.
Aspects of \cite{singer1985solving} were subsequently generalized:
M. Singer \cite{singer1988algebraic} gave representation-theoretic criteria for solvability by lower order operators;
A. Person \cite{person2002solving} used \cite{singer1988algebraic} to classify order $4$ operators that are solvable in this way; 
M. van Hoeij \cite{hoeij2007solving} developed an algorithm for \cite{singer1985solving};
K. A. Nguyen \cite{nguyen2009d} and Nguyen-van der Put \cite{nguyen2010solving} generalized and streamlined \cite{singer1988algebraic}, and gave other solvability criteria.

To analyze Cases~(b) and~(c), we introduce representation-theoretic ideas that contextualize the results of Hendriks-Singer \cite{hendricks1999solving}.
We will also formulate results in terms of difference modules to simplify proofs. 
Viewing difference modules as ``twisted'' $\mathbb{Z}$-representations allows us to restrict and induce difference modules along subgroups of $\mathbb{Z}$.
Two natural operations on sequences---sectioning and interlacing---readily correspond to 
restriction and induction.
This was hinted at by the work of
K. Amano \cite[Section 3.7]{amano2006relative} which also generalized \cite{hendricks1999solving} to Artinian simple module algebras. 

As future work, we expect that (from private communication with M. van der Put) an adaptation of \cite{nguyen2009d} and \cite{nguyen2010solving} will determine when difference modules are solvable in terms of lower-dimensional difference modules. This would simplify the proofs given here and describe $d$-solvability in general.

Our paper is organized as follows. 
\Cref{section:background} recalls relevant facts about difference operators and modules.
\Cref{section:eulerian} formally defines $2$-expressible sequences and relates them to Eulerian groups.
This section also characterizes operators whose solutions are $2$-expressible.
The symmetric power construction as recalled in \Cref{section:symmetric-powers} gives us a criterion (\Cref{prop:sym-square}) to check for Case (c) of our main theorem.
\Cref{section:restriction-induction} defines induced and restricted difference modules.
\Cref{section:6} gives examples of how these operations help with simplifying difference operators and motivates the rest of the paper.
\Cref{section:semisimplicity} describes theoretical results on induction and restriction,
most notably \Cref{thm:absolute-irreducibility}, which characterizes the irreducible but not absolutely irreducible modules.
In \Cref{section:main}, we prove our main result, \Cref{thm:general-classification}. 

The paper ends with three appendices. \Cref{appendix:groebner} completes the proof of \Cref{prop:sym-square}. 
\Cref{appendix:mackey-formula} covers a difference analogue of Mackey's formula that go beyond the results discussed in \Cref{section:semisimplicity}.
Finally, \Cref{appendix:absolute irreducibility} explains how our notion of absolute irreducibility aligns with other common usages of the term.\\

\noindent\textbf{Acknowledgements:} 
The first and second authors were supported by NSF grant CCF-2007959. 
We thank Ettore Aldrovandi for helpful conversations and Marius van der Put for helpful comments.
We are grateful to the referees whose feedback helped improve this paper.
\\

\noindent\textbf{Assumptions:} In this paper,
$(F,\phi)$ is a \emph{difference field}, i.e., $F$ is a field and $\phi:F\to F$ is a field automorphism.
The subset $C\coloneqq \left\lbrace f\in F\mid \phi(f)=f\right\rbrace$ is an algebraically closed field of characteristic zero.

\section{Background}
\label{section:background}
In this section, we state some facts about difference operators and difference Galois theory needed for this paper. 
We will start by describing difference operators in terms of difference modules since the latter is a more flexible setting to develop our results.
More details of these backgrounds can be found in \cite[Chapter 1]{put2006galois}.

\subsection{Difference modules}

Formally, a $D$-\emph{module} is a left module over the noncommutative ring $D\coloneqq F[\Phi,\Phi^{-1}]$ with multiplication $\Phi\circ f= \phi(f)\circ \Phi$ for all $f\in F$. 
Unless stated otherwise, all difference modules are finite-dimensional as $F$-vector spaces. 
This dimension is the \emph{dimension} of the difference module.

Let $M$ be a $D$-module.
If $\{m_{i}\}$ is an $F$-basis of $M$ and $\Phi(m_{i})=\sum_{j} a_{ij}m_{j}$, then $M$ corresponds to the matrix difference equation $\phi(Y)=(a_{ij})^{-1}Y$ according to \cite[Section 1.4]{put2006galois}.

An element of $D$ is a \emph{difference operator} over $(F,\phi)$. 
To a nonzero difference operator $L$, we associate the $D$-module
\[
    M_L\coloneqq D/DL.
\]
The \emph{order} of $L$ is defined as the dimension of $M_{L}$.
Two difference operators $L$ and $L'$ are \emph{gauge equivalent} if $M_L$ and $M_{L'}$ are isomorphic $D$-modules.

Let $M$ be a $D$-module. 
A \emph{minimal operator} of $m\in M$ is a 
generator of the left ideal $\{L\in D\mid L(m)=0\}$ of $D$. 
We call $m$ a \emph{cyclic vector} if $D\cdot m=M$, in which case a minimal operator of $m$ has order $\dim_{F}M$.
Therefore, a difference operator is equivalent to a $D$-module with a choice of cyclic vector.
If $F$ has an element $f$ for which $\phi^{m}(f)\neq f$ for all $m\ge 1$, then every $D$-module has a cyclic vector \cite[Theorem B.2]{hendricks1999solving}.

Solutions of difference operators and difference modules reside within difference rings.
Formally, a \emph{difference ring} consists of a ring and an automorphism of the ring. 
Let $(R,\phi)$ be a $D$-\emph{algebra}, i.e., $(R,\phi)$ is a difference ring with a left $D$-action such that $\Phi\cdot r=\phi(r)$.
The \emph{solution space} of an operator $L$ in $R$ is the set $V_{R}(L)\coloneqq \{f\in R\mid L(f)=0\}$. 
The \emph{solution space} of a $D$-module $M$ is the set
\[
    V_{R}(M)\coloneqq \Hom_{D}(M,R).
\]
An $R$-\emph{point} of $M$ is an element of $V_{R}(M)$. 
If $m$ is a cyclic vector of $M$ and $L$ is the\footnote{\label{note1} Minimal operators and LCLM's are unique up to left multiplication by units in $D$.} minimal operator of $m$, then the evaluation map
\[
    V_{R}(M)\to V_{R}(L)\colon\quad s\mapsto s (m)
\]
defines a bijection between the $R$-points of $M$ and the solutions of $L$ in $R$. 
In general, $V_{R}(M)$ is a $C$-vector space of dimension at most $\dim_{F} M$. We say $V_{R}(M)$ is a \emph{full solution space} if its $C$-dimension is $\dim_{F} M$.

Let $L$ and $L'$ be difference operators. 
Then $\LCLM(L,L')$ is the\footnoteref{note1} generator of the left ideal $DL\cap DL'$ of $D$. 
Note that 
\[
    V_{R}(\LCLM(L,L'))=V_{R}(L)+ V_{R}(L')
\]
when these are full solution spaces.

\subsection{Difference Galois theory}
\label{subsection:picard-vessiot}

Let $M$ be a $D$-module with an associated matrix difference equation $\phi (Y)=AY$, $A\in \GL_{n}(F)$. 
A \emph{Picard-Vessiot ring} for $M$ over $F$ is a $D$-algebra $R$ satisfying the following conditions.
\begin{enumerate}
    \item $\left\lbrace f\in R\mid \phi(f)=f\right\rbrace = C$.
    \item The only ideals $I$ of $R$ that satisfy $\phi(I)\subseteq I$ are $0$ and $R$.
    \item $R = F[z_{ij},\det(Z)^{-1}\mid 1\le i, j\le n]$ for some $Z = (z_{ij})\in \GL_{n}(R)$ satisfying $\phi(Z)=AZ$.
\end{enumerate}

A Picard-Vessiot ring for $M$ exists, is unique up to an isomorphism of $D$-algebras \cite[Proposition 1.9]{put2006galois}, and is independent of the chosen matrix difference equation \cite[page 24]{put2006galois}.

Let $R$ be a Picard-Vessiot ring of $M$. The \emph{difference Galois group} of $R/F$ is the group $\Gal(R/F)$ of $D$-algebra automorphisms of $R$. Let $G=\Gal(R/F)$. By \cite[Theorem 1.13]{put2006galois}, $G$ is a linear algebraic group over $C$. For $\sigma\in G$ and $s\in V_{R}(M)$, $(\sigma,s)\mapsto\sigma\circ s$ defines a $G$-action on $V_{R}(M)$ and induces an injective homomorphism $G\to \GL(V_{R}(M))$ of algebraic groups. 

The total ring of fractions $K$ of a Picard-Vessiot ring of $M$ is called a \emph{total Picard-Vessiot ring} of $M$. In place of $M$, we often refer to the (total) Picard-Vessiot ring of an operator $L$ or matrix equation associated to $M$. We also let $\Gal(K/F)$, $\Gal(M/F)$, $\Gal(M)$, and $\Gal(L)$ refer to $\Gal(R/F)$.

\begin{proposition}
[{\cite[Corollary A.7]{hendricks1999solving}}]
\label{prop:solution-space-correspondence}
    Let $M$ be a $D$-module that has a cyclic vector and a total Picard-Vessiot ring $K$. The map $N\mapsto V_K(N)$ defines a bijection between the $D$-module quotients of $M$ and the $\Gal(M)$-invariant subspaces of $V_K(M)$. Moreover, $V_K(N)$ is a full solution space of $N$ for each quotient module $N$ of $M$.
\end{proposition}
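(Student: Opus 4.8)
The plan is to establish the Galois correspondence by translating the statement about $D$-module quotients into the language of Picard-Vessiot theory, where we have a concrete handle on the solution space $V_K(M)$ and the action of the Galois group $G \coloneqq \Gal(M)$ on it. The key structural fact, which I would invoke first, is that since $M$ has a cyclic vector $m$ with minimal operator $L$, the evaluation map $V_K(M) \to V_K(L)$ is a bijection, and (using the total Picard-Vessiot ring $K$) that $V_K(M)$ is a full solution space, i.e. $\dim_C V_K(M) = \dim_F M$. This ``no loss of solutions'' property is what makes the correspondence tight rather than merely injective.

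First I would set up the two maps realizing the bijection. Given a $D$-module quotient $q\colon M \twoheadrightarrow N$, precomposition sends $V_K(N) = \Hom_D(N,K)$ into $V_K(M) = \Hom_D(M,K)$, and the image is a $G$-invariant subspace because $G$ acts by postcomposition on the target $K$, which commutes with precomposition by the fixed map $q$. Conversely, given a $G$-invariant subspace $W \subseteq V_K(M)$, I would build a quotient of $M$ whose solution space is exactly $W$. The natural candidate is to take the ``annihilator'' of $W$: set $N_W \coloneqq M / M_W$, where $M_W$ is the intersection over $s \in W$ of $\ker(s\colon M \to K)$, or dually to use the double-dual / evaluation pairing $M \times V_K(M) \to K$. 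The reason this lands in the category of $D$-modules rather than merely $F$-modules is precisely the $G$-invariance of $W$: Galois descent (the $C$-span being recovered as $G$-invariants over $K$) forces $M_W$ to be defined over $F$ and $\Phi$-stable.

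The two constructions being mutually inverse is where the bulk of the verification lies, and I expect the main obstacle to be the dimension bookkeeping that shows no quotient is ``lost'' and no $G$-invariant subspace fails to arise. Concretely, I would need that for each quotient $N$, the inclusion $V_K(N) \hookrightarrow V_K(M)$ is dimension-preserving in the sense that $\dim_C V_K(N) = \dim_F N$; this is the assertion that $V_K(N)$ is full, and it must be proved rather than assumed. The cleanest route is to observe that a total Picard-Vessiot ring of $M$ serves (or contains) a total Picard-Vessiot ring for every subquotient of $M$, so the fullness property is inherited by $N$. With fullness in hand, a counting argument comparing $\dim_C V_K(N)$, $\dim_C W$, and the $F$-dimensions of the associated modules forces the two assignments $N \mapsto V_K(N)$ and $W \mapsto N_W$ to be inverse bijections.

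Finally, I would record the ``moreover'' clause as essentially a byproduct: the fullness of $V_K(N)$ for every quotient $N$ is exactly the ingredient extracted in the dimension-counting step above, so once the bijection is established the statement that each $V_K(N)$ is a full solution space is already proved. The whole argument is a difference-equation analogue of the standard Galois correspondence between subrepresentations and invariant subspaces; since the cited source is \cite[Corollary A.7]{hendricks1999solving}, I would expect the cleanest exposition to defer the delicate fullness step to the existence of a common total Picard-Vessiot ring and otherwise proceed by the formal invariant-subspace yoga.
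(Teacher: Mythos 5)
The paper itself gives no proof of this proposition; it is quoted directly from Corollary A.7 of Hendriks--Singer, so your outline can only be judged on its own terms. Its skeleton (two maps, fullness, dimension count) is the standard one, and your route to the ``moreover'' clause --- that the total Picard--Vessiot ring of $M$ already contains full solution spaces for all quotients --- is the right idea. But there is a genuine gap, and it is exactly at the point where Galois theory must do the work.

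You invoke $G$-invariance of $W$ to argue that $M_W=\bigcap_{s\in W}\ker(s)$ is a $D$-submodule (``defined over $F$ and $\Phi$-stable''). That is automatic for \emph{any} subset $W\subseteq V_K(M)$: each $s$ lies in $\Hom_D(M,K)$, so $\ker(s)$ is an $F$-subspace of $M$ stable under $\Phi^{\pm1}$, hence a $D$-submodule, and so is any intersection of such kernels. No descent is needed there. The place where $G$-invariance is indispensable is the step you delegate to ``a counting argument'': showing $V_K(M/M_W)=W$ on the nose, not merely $V_K(M/M_W)\supseteq W$. Fullness gives $\dim_C V_K(M/M_W)=\dim_F(M/M_W)$ and the inclusion $W\subseteq V_K(M/M_W)$, but nothing in your argument bounds $\dim_F(M/M_W)$ above by $\dim_C W$, and no such bound can follow from fullness and counting alone: for a non-$G$-invariant $W$ the construction still produces a perfectly good quotient $M/M_W$, and there equality provably fails, since $V_K$ of any quotient is $G$-invariant. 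A correct proof must use $G$-invariance precisely here, via the torsor/descent property of the Picard--Vessiot ring --- for instance, the $G$-equivariant isomorphism $K\otimes_C V_K(M)\cong K\otimes_F\Hom_F(M,F)$ (valid by fullness, with $G$ acting trivially on $\Hom_F(M,F)$), under which a $G$-invariant $W$ descends, using $K^G=C\cdot F$-type invariant-taking, to an $F$-subspace of $\Hom_F(M,F)$ of $F$-dimension $\dim_C W$ whose perpendicular is $M_W$; this yields $\dim_F M_W=\dim_F M-\dim_C W$ and closes the count. As written, your proposal applies Galois descent where it is not needed and omits it where the statement would otherwise be false.
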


The Galois correspondence holds for difference modules.

\begin{proposition}
\label{prop:galois-correspondence}
Let $K/F$ be a total Picard-Vessiot extension with difference Galois group $G$. Let $\mathcal{F}$ be the set of $D$-subalgebras $E$ of $K$ such that every non zero-divisor of $E$ is a unit of $E$. Let $\mathcal{G}$ be the set of algebraic subgroups of $G$. 
    \begin{enumerate}[label=(\alph*)]
        \item\label{galois-a} The maps $\mathcal{F}\to \mathcal{G}\colon E\mapsto \Gal (K/E)$ and $\mathcal{G}\to \mathcal{F}\colon H\mapsto K^{H}$ are inverses. 
        \item\label{galois-b}\sloppy If $E/F$ is a total Picard-Vessiot extension with $E \subseteq K$, then $\Gal(K/E)\trianglelefteq G$.
    \end{enumerate}
\end{proposition}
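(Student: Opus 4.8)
The plan is to mimic the classical differential Galois correspondence (see \cite[Chapter 1]{put2006galois}), the only genuinely new wrinkle being that a total Picard-Vessiot ring $K$ is a finite product of fields rather than a single field, so $G$ permutes the primitive idempotents of $K$. I would organize everything around two core facts: (A) $K^{G}=F$; and (B) the assignment $H\mapsto K^{H}$ is injective on $\mathcal{G}$. Both rest on the torsor isomorphism $R\otimes_{F}R\cong R\otimes_{C}C[G]$ for a Picard-Vessiot ring $R$ with total ring of fractions $K$ \cite{put2006galois}. Fact~(A) says that the $\phi$- and $G$-invariants of $K$ collapse to the base $F$ (combining property~(1) of a Picard-Vessiot ring with the torsor structure), while Fact~(B) is the statement that $G$ has enough invariants to separate closed subgroups, which is invariant theory for the $G$-action on $K$ enabled by the same isomorphism.

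For part~\ref{galois-a}, the inclusions $H\subseteq\Gal(K/K^{H})$ and $E\subseteq K^{\Gal(K/E)}$ are immediate from the definitions, so the content is the two reverse inclusions. To prove $K^{\Gal(K/E)}=E$ for $E\in\mathcal{F}$, I would first show that $K$ is itself a total Picard-Vessiot ring over the base $E$, with Galois group $\Gal(K/E)$: here the hypothesis that every non zero-divisor of $E$ is a unit is exactly what makes $E$ simple enough to serve as a base (so that ideal condition~(2) descends), and then Fact~(A) applied with $E$ in place of $F$ gives the claim. Dually, to prove $\Gal(K/K^{H})=H$, set $E=K^{H}\in\mathcal{F}$; the previous step yields $K^{\Gal(K/E)}=E=K^{H}$, and since $H\subseteq\Gal(K/E)$ are both closed subgroups of $G$ with the same fixed ring, Fact~(B) forces $H=\Gal(K/E)=\Gal(K/K^{H})$.

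For part~\ref{galois-b}, the key observation is that a Picard-Vessiot subring is $G$-stable. If $E/F$ is total Picard-Vessiot, generated over $F$ by the entries of a fundamental matrix $Z$ with $\phi(Z)=AZ$ and $A\in\GL_{n}(F)$, then for any $\sigma\in G$ the matrix $\sigma(Z)$ satisfies $\phi(\sigma(Z))=A\,\sigma(Z)$, so $Z^{-1}\sigma(Z)$ is $\phi$-invariant and hence lies in $\GL_{n}(C)$; this gives $\sigma(Z)=Zc$ for some $c\in\GL_{n}(C)$ and, applying the same to $\sigma^{-1}$, we get $\sigma(E)=E$. Granting $\sigma(E)=E$, for $\tau\in\Gal(K/E)$ and $x\in E$ we have $\sigma^{-1}(x)\in E$, whence $(\sigma\tau\sigma^{-1})(x)=\sigma(\sigma^{-1}(x))=x$; thus $\sigma\,\Gal(K/E)\,\sigma^{-1}=\Gal(K/E)$, i.e.\ $\Gal(K/E)\trianglelefteq G$.

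The main obstacle I anticipate is Fact~(A) together with the claim that $K$ is a total Picard-Vessiot ring over an arbitrary $E\in\mathcal{F}$: both require controlling the idempotent structure of $K$ and the way $G$ permutes its field factors, since the clean ``fixed field equals base'' statement from the differential theory has to be re-derived in the product-of-fields setting. Once the torsor isomorphism is available and this idempotent bookkeeping is carried out, Facts~(A) and~(B) reduce to standard invariant theory of the $G$-action, and the remainder of the argument is formal.
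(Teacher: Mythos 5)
Your proposal is correct, but the two parts relate to the paper's proof in different ways. For part~(b) you take the same route as the paper: its one-line computation $\sigma\Gal(K/E)\sigma^{-1}=\Gal(K/\sigma(E))=\Gal(K/E)$ silently uses that $\sigma(E)=E$ for every $\sigma\in G$, and your fundamental-matrix argument ($Z^{-1}\sigma(Z)$ is $\phi$-invariant, hence $\sigma(Z)=Zc$ with $c\in\GL_n(C)$, hence $\sigma$ preserves the Picard--Vessiot subring and therefore its total ring of fractions $E$) is exactly the justification of that silent step, so your version is the paper's proof of (b) with the $G$-stability of $E$ made explicit. For part~(a), by contrast, the paper offers no argument at all: it cites \cite[Theorem 1.29]{put2006galois}. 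What you sketch is essentially a re-derivation of that cited theorem: your Facts~(A) and~(B), the claim that $K$ is a total Picard--Vessiot ring over each $E\in\mathcal{F}$, and the idempotent bookkeeping you flag as the main obstacle are precisely the content of Theorem~1.29 and its proof. Your outline of how the correspondence follows from (A) and (B) is structurally sound---in particular, deducing $\Gal(K/K^{H})=H$ by applying injectivity of $H\mapsto K^{H}$ to the inclusion $H\subseteq\Gal(K/K^{H})$, after the first step gives $K^{\Gal(K/K^{H})}=K^{H}$, is a valid reduction---but (A) and (B) themselves remain assertions resting on the torsor isomorphism, so your treatment of (a) is not more elementary than the citation; it is an expanded sketch of the same underlying argument. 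The trade-off: the paper buys brevity by outsourcing (a) entirely; you buy a self-contained outline and a useful explicit record of why $E$ is $G$-stable, at the cost that the invariant-theory core of (a) would still have to be carried out (or, in the end, cited) to have a complete proof.
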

\begin{proof}
    \ref{galois-a} is \cite[Theorem 1.29]{put2006galois}, and 
    \ref{galois-b} follows from the computation 
    \[
        \sigma \Gal (K/E)\sigma^{-1} = \Gal (K/\sigma (E)) = \Gal (K/E)
    \]
    which holds for all $\sigma\in G$ since $E/F$ is a total Picard-Vessiot extension.
\end{proof}

\section{2-expressible sequences}
\label{section:eulerian}

This section describes the order $2$ generalization of Liouvillian sequences that we will call $2$-expressible sequences. 
Similar to how \cite{hendricks1999solving} classified difference operators with Liouvillian solutions, our goal is to classify operators with 2-expressible solutions, with some partial progress described in \Cref{section:6} and a definitive result in \Cref{thm:general-classification}.
For this, we study the relation between difference operators, $2$-expressible solutions, and the Galois group.
Throughout this section, we assume $(F,\phi)=(C(x),x\mapsto x+1)$.

We first recall the definition of a sequence and some basic constructions on sequences as in \cite[Section 3]{hendricks1999solving}. 
Let $\mathbb{N}=\lbrace 0,1,2,\dots\rbrace$.
We define $\mathcal{S}$ to be the quotient set of functions $s\colon\mathbb{N}\to C$ under the equivalence relation $s\sim t$ if $s(n) = t(n)$ for all but finitely many $n$. 
An element of $\mathcal{S}$ is called a \emph{sequence} and is represented as a function $s\colon\mathbb{N}\to C$ or as a list $(s(0),s(1),s(2),\dots)$. 
The set $\mathcal{S}$ becomes a difference ring under function addition and multiplication, and $\phi (s(n))\coloneqq s(n+1)$.
Since rational functions have finitely many poles, $F=C(x)$ embeds into $(\mathcal{S},\phi)$ by evaluation at $\mathbb{N}$.
Every difference operator over $F$ has a full solution space in $\mathcal{S}$ by \cite[Theorem 8.2.1]{petkovvsek1997wilf}.

We construct new sequences from old ones as follows. For $s\in \mathcal{S}$,
\[
    \Sigma s\coloneqq \lbrace t\in \mathcal{S}\mid (\phi-1)t=s\rbrace
\]
is the set of \emph{indefinite sums} of $s$. 
The \emph{interlacing} of $s_{0},s_{1},\dots,s_{m-1}\in \mathcal{S}$ is the sequence $t$ defined by $t(mn+i)=s_{i}(n)$ for $n\ge 0$ and $0\le i<m$. 
The $m$-\emph{interlacing} of $s$ \emph{with zeros} is defined as the interlacing of $s$ with $(m-1)$-many zero sequences. 

\begin{remark}\hfill
    \begin{enumerate}
        \item The interlacing of $s_{0},s_{1},\dots,s_{m-1}$ is the sum 
        $\sum \phi^{-i}(t_i)$
        where $t_{i}$ is the $m$-interlacing of $s_{i}$ with zeros. 
        \item If $s$ satisfies an operator $L(x,\Phi)=\sum_{i}f_{i}(x)\Phi^{i}$, then the $m$-interlacing of $s$ with zeros satisfies the operator $L(x/m,\Phi^{m})=\sum_{i}f_{i}(x/m)\Phi^{im}$.
    \end{enumerate}
\end{remark}

In \cite[Definition 3.3]{hendricks1999solving} the ring of \emph{Liouvillian sequences} $\mathcal{L}$ is defined as the smallest $D$-subalgebra
$R$ of $\mathcal{S}$ that satisfies the following conditions.
\begin{enumerate}
    \item\label{liouvillian-1} Any order $1$ operator in $D$ has a full solution space in $R$. 
    \item\label{liouvillian-2} If $s\in R$, then $\Sigma s\subseteq R$.
    \item\label{liouvillian-3} If $s\in R$, then the $m$-interlacing of $s$ with zeros is in $R$ for all $m\ge 1$.
\end{enumerate}

Condition \eqref{liouvillian-3} and the closure of $\mathcal{L}$ under difference ring operations imply that $\mathcal{L}$ is closed under arbitrary interlacing.

We can now define $2$-expressible sequences in a similar way to the Eulerian solutions considered in \cite{singer1985solving}.

\begin{definition}
\label{def:eulerian}
    Consider the family of $D$-subalgebras $R$ of $\mathcal{S}$ that satisfy conditions \eqref{liouvillian-2} and \eqref{liouvillian-3} above, as well as
    \begin{enumerate}[resume]
        \item\label{liouvillian-4} Any order $2$ operator in $D$ has a full solution space in $R$. 
    \end{enumerate}
    We define $\mathcal{E}$ to be the smallest $D$-algebra within this family.
    An element of $\mathcal{E}$ is called a \emph{$2$-expressible sequence}.
    An $\mathcal{S}$-point $s\colon M\to \mathcal{S}$ of a $D$-module $M$ is \emph{$2$-expressible} if its image is contained in $\mathcal{E}$.
\end{definition}

Clearly a solution of an operator $L$ is $2$-expressible if and only if the corresponding $\mathcal{S}$-point of $M_{L}$ is $2$-expressible.

\cite[Theorem 3.4]{hendricks1999solving} shows how Liouvillian sequences correspond to solvable Galois groups.
We will give an analog for $2$-expressible sequences and Eulerian Galois groups.
\begin{definition}\cite[page 667]{singer1985solving} An algebraic group $G$ is said to be \emph{Eulerian} if there exist subgroups $1=G_{0}\subset \dots \subset G_{n}=G$ such that
$G_{i}\triangleleft G_{i+1}$ and $G_{i+1}/G_{i}$ is finite or isomorphic to $\mathbb{G}_{a}$, $\mathbb{G}_{m}$, or $\PSL_{2}(C)$.\end{definition} 
Eulerian groups are closed under taking closed subgroups, quotients, and direct products \cite[Lemma 2.2]{singer1985solving}. 
Since the connected closed subgroups of $\SL_{2}(C)$ are either solvable or $\SL_{2}(C)$ by \cite[page 31]{kaplansky1957introduction} (see also \cite[page 7]{kovacic1986algorithm} and \cite[page 28]{put2003galois}), 
the Galois group of an order $2$ operator is Eulerian.

We also need to consider operators corresponding to 2-expressible sequences. 

\begin{definition}\label{def:fully-2-solvable-operator}
    We define $\mathcal{F}$ to be the smallest subset of $D$ that satisfies the following conditions.
    \begin{enumerate}[label=(\roman*)]
        \item\label{item:operation-1} Any order $2$ operator in $D$ is in $\mathcal{F}$.
        \item\label{item:operation-2} If $L,L'\in \mathcal{F}$, then $L \circ (\Phi - 1)$, $\Phi^{i}\circ L \circ \Phi^{-i}$, $\LCLM(L,L')$, and $L\cs L'$ are in $\mathcal{F}$.
        \item\label{item:operation-6} If $L$ is in $\mathcal{F}$, then every right factor of $L$ is in $\mathcal{F}$.
        \item\label{item:operation-7} If $L = L(x, \Phi)$ is in $\mathcal{F}$, then so is $L(x/m, \Phi^m)$ for any $m \ge 1$.
    \end{enumerate}
    The minimal number of times the operations \ref{item:operation-2}--\ref{item:operation-7} 
    needed
    to obtain $L\in \mathcal{F}$ is called the \emph{complexity} of $L$ (the complexity is 0 if $L$ has order 2).
\end{definition}

\begin{theorem}
    \label{thm:eulerian-group}
    Let $s\in \mathcal{E}$. A minimal operator of $s$ exists, is in $\mathcal{F}$, and has an Eulerian Galois group.
\end{theorem}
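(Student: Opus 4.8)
The plan is to prove all three assertions at once by introducing the set
\[
    R \coloneqq \{s\in\mathcal{S}\mid L(s)=0 \text{ for some } L\in\mathcal{F}\}
\]
and showing $\mathcal{E}\subseteq R$. Since $D=F[\Phi,\Phi^{-1}]$ is a principal left ideal domain, every $s\in R$ is annihilated by a nonzero left ideal and hence has a minimal operator; because that minimal operator right-divides the given $L\in\mathcal{F}$, condition \ref{item:operation-6} (closure of $\mathcal{F}$ under right factors) places it in $\mathcal{F}$. Thus once $\mathcal{E}\subseteq R$ is established, the existence of a minimal operator and its membership in $\mathcal{F}$ follow immediately for every $s\in\mathcal{E}$.

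First I would verify that $R$ is a $D$-subalgebra of $\mathcal{S}$ satisfying conditions \eqref{liouvillian-2}, \eqref{liouvillian-3}, and \eqref{liouvillian-4} of \Cref{def:eulerian}; minimality of $\mathcal{E}$ then gives $\mathcal{E}\subseteq R$. The key point is that each closure operation on sequences is mirrored by an operation in \Cref{def:fully-2-solvable-operator} producing an operator that annihilates the new sequence, after which \ref{item:operation-6} drops down to its minimal operator. Concretely: every $f\in F$ has an order-$1$ minimal operator, which lies in $\mathcal{F}$ as a right factor of an order-$2$ multiple, so $F\subseteq R$ and $1\in R$; if $L(s)=0$ then $\Phi\circ L\circ\Phi^{-1}$ kills $\Phi s$; sums $s+t$ and products $st$ are killed by $\LCLM$ and by the symmetric product $\cs$ of the respective operators (operation \ref{item:operation-2}); indefinite sums $t\in\Sigma s$ satisfy $L\circ(\Phi-1)$, giving \eqref{liouvillian-2}; the $m$-interlacing with zeros satisfies $L(x/m,\Phi^m)$ by the Remark (operation \ref{item:operation-7}), giving \eqref{liouvillian-3}; and any order-$2$ operator lies in $\mathcal{F}$ by \ref{item:operation-1}, so its solutions lie in $R$, giving \eqref{liouvillian-4}. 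This settles the first two claims.

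It then remains to show that \emph{every} $L\in\mathcal{F}$ has an Eulerian Galois group, which I would prove by induction on the complexity of $L$. The base case (complexity $0$, order $\le 2$) is exactly the remark preceding this theorem. For the inductive step I use that Eulerian groups are closed under closed subgroups, quotients, and direct products \cite[Lemma 2.2]{singer1985solving}, together with the elementary fact that the class is also closed under extensions: given $1\to N\to G\to Q\to 1$ with $N,Q$ Eulerian, one concatenates the defining subnormal series of $N$ with the pullback under $G\to Q$ of that of $Q$. Then $\LCLM(L,L')$ and $L\cs L'$ have Galois groups realized inside $\Gal(L)\times\Gal(L')$ via the compositum Picard-Vessiot ring, hence Eulerian; conjugation $\Phi^i\circ L\circ\Phi^{-i}$ leaves the Galois group isomorphic; a right factor yields a quotient module, hence a quotient group by \Cref{prop:solution-space-correspondence}; and $L\circ(\Phi-1)$ exhibits $\Gal(L\circ(\Phi-1))$ as an extension of $\Gal(L)$ by a subgroup of $\Ga$. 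Since the minimal operator of $s\in\mathcal{E}$ lies in $\mathcal{F}$, it inherits an Eulerian Galois group.

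I expect the main obstacle to be the interlacing operation \ref{item:operation-7}: controlling $\Gal(L(x/m,\Phi^m))$. The plan is to decompose the solution space over the residue classes modulo $m$, on which the sections $t_r(k)=t(mk+r)$ solve the translates $L(x+r/m,\Phi)$; as $x\mapsto x+r/m$ commutes with $\phi$, each translate has Galois group isomorphic to $\Gal(L)$, while $\Phi$ cyclically permutes the sections. This should exhibit $\Gal(L(x/m,\Phi^m))$ as an extension of a finite cyclic group (of order dividing $m$) by a subgroup of a finite product of copies of $\Gal(L)$, so closure of Eulerian groups under finite products, subgroups, and extensions by finite groups finishes this case. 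Making the extension structure precise—identifying the Picard-Vessiot ring of the interlaced operator with the compositum of the sections' Picard-Vessiot rings together with the cyclic piece—is the delicate step, and is the difference-algebra analogue of the interlacing analysis in \cite{hendricks1999solving}.
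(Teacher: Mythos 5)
Your proposal is correct, but its Galois-theoretic half follows a genuinely different route from the paper's. The first two claims are handled identically: your set $R$ coincides (via closure under right factors, item \ref{item:operation-6}) with the paper's set $B=\{s \mid \text{the minimal operator of } s \text{ is in } \mathcal{F}\}$, and the closure computations you list are exactly the ones the paper performs. For the Eulerian claim the paper stays at the level of sequences: it introduces $A=\{s \mid \text{the minimal operator of } s \text{ has Eulerian Galois group}\}$, quotes the proof of \cite[Theorem 3.4]{hendricks1999solving} and \cite[Lemma A.8]{hendricks1999solving} for the Galois groups of operators annihilating $\phi^i(s)$, $\Sigma s$, interlacings, $st$, and $s-t$, and concludes $\mathcal{E}\subseteq A$ by minimality of $\mathcal{E}$. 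You instead prove the stronger operator-level statement that \emph{every} $L\in\mathcal{F}$ has an Eulerian Galois group, by induction on complexity---the same induction the paper reserves for \Cref{lemma:shift} and \Cref{thm:fully-2-solvable}. Both routes rest on the same Hendriks--Singer facts, and both outsource the one genuinely delicate point---the structure of $\Gal(L(x/m,\Phi^m))$ as a subgroup of a finite cyclic extension of $\Gal(L)^m$---to the interlacing analysis of \cite{hendricks1999solving}, so your deferral there is no weaker than the paper's citation. What your route buys is a cleaner separation of the two halves of the theorem and a stronger intermediate result (all of $\mathcal{F}$ has Eulerian Galois groups, which pairs naturally with \Cref{thm:fully-2-solvable}, asserting all of $\mathcal{F}$ is fully $2$-solvable); the cost is that you must make explicit that Eulerian groups are closed under extensions, which your concatenation argument handles correctly---and which the paper needs implicitly anyway when it treats semidirect products with $\Ga^n$ and finite cyclic extensions of $G^m$ as Eulerian. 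Two harmless slips: the kernel of $\Gal(L\circ(\Phi-1))\twoheadrightarrow\Gal(L)$ is a subgroup of $\Ga^{n}$ (one copy of $\Ga$ per independent solution of $L$), not of $\Ga$; and the Galois groups of $\LCLM(L,L')$ and $L\cs L'$ are quotients of subgroups of $\Gal(L)\times\Gal(L')$ rather than subgroups---both are immaterial since Eulerian groups are closed under closed subgroups and quotients.
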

\begin{proof}

Let $\mathcal{V_{D}}=\{s \in \mathcal{S} \mid \exists \, L \in D - \{0\},\,L(s)=0\}$, the set of elements of $S$ that satisfy a recurrence relation, and let
     \begin{equation*}
        \begin{aligned}
            A &=\{s\in \mathcal{V_{D}}\mid \text{minimal operator of }s\text{ has Eulerian Galois group}\}\\
            B&=\{s\in \mathcal{V_{D}}\mid \text{minimal operator of }s\text{ is in } \mathcal{F}\}.
        \end{aligned}
    \end{equation*}
    Solutions of order $2$ operators are clearly in $B$, and in $A$ by the discussion preceding \Cref{def:fully-2-solvable-operator}.
    To prove the theorem, that $A$ and $B$ contain $\mathcal{E}$, it remains to show that $A$ and $B$ are $D$-algebras closed under \eqref{liouvillian-2} and \eqref{liouvillian-3}.

    Let $s,t\in A$ be solutions to operators with Eulerian Galois groups $G,H$.
    The proof of \cite[Theorem 3.4]{hendricks1999solving} shows that $\phi^i (s)$, $\Sigma s$, and the $m$-interlacing of $s$ with zeros are solutions to operators whose Galois groups are $G$, a subgroup of the semidirect product of $G$ and $\mathbb{G}_{a}^{n}$ for some $n$, or a finite cyclic extension of $G^{m}$.
    Moreover, $st$ and $s-t$ lie in a composite Picard-Vessiot ring with Galois group $G'\le G\times H$.
    By \cite[Lemma A.8]{hendricks1999solving}, both $st$ and $s-t$ are solutions to operators with Galois groups that are quotients of $G'$.
    Eulerian groups are closed under taking closed subgroups, quotients, and direct products \cite[Lemma 2.2]{singer1985solving}, 
    so $\phi(s), \Sigma s$, the $m$-interlacing of $s$ with zeros, $st$, and $s-t$ satisfy operators with Eulerian Galois groups and so are in $A$. Thus $\mathcal{E} \subseteq A$.

    Let $s,t\in B$ have minimal operators $L,L'$ in $\mathcal{F}$. 
    Then $s+t$, $st$, $\phi^{i}(s)$, and the $m$-interlacing of $s$ with zeros satisfy $\LCLM(L,L')$, $L\cs L'$, $\Phi^{i}\circ L\circ \Phi^{-i}$, and $L(x/m,\Phi^{m})$, respectively, which are all in $\mathcal{F}$. 
    By \ref{item:operation-6}, their minimal operators are also in $\mathcal{F}$.
\end{proof}

\begin{definition} \label{solv2}
    An operator is \emph{$2$-solvable} if it has a nonzero 2-expressible solution. It is \emph{fully $2$-solvable} if it has a basis of 2-expressible solutions.
\end{definition}

\Cref{thm:fully-2-solvable} below shows that any minimal operator of $s\in \mathcal{E}$ is fully $2$-solvable. First we need a lemma.

\begin{lemma}\label{lemma:shift}
    If $L = L(x, \Phi)$ is in $\mathcal{F}$ then so is $L(x+c, \Phi)$ for any $c\in C$. This operation preserves the complexity of $L$.
\end{lemma}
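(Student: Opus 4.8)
The plan is to realize the substitution $x \mapsto x+c$ as a ring automorphism of $D$ and to show that it transports the entire inductive construction of $\mathcal{F}$. Concretely, for $c \in C$ define $\sigma_c \colon D \to D$ by $\sigma_c(\Phi) = \Phi$ and $(\sigma_c f)(x) = f(x+c)$ for $f \in F = C(x)$, so that $\sigma_c\big(L(x,\Phi)\big) = L(x+c,\Phi)$. Since the shift $x \mapsto x+c$ commutes with $\phi\colon x \mapsto x+1$, a direct check shows $\sigma_c$ respects the relation $\Phi f = \phi(f)\Phi$ and is therefore a $C$-algebra automorphism of $D$ with inverse $\sigma_{-c}$. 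First I would record this, together with the facts that $\sigma_c$ fixes $\Phi$, $\Phi^{-1}$, and $\Phi-1$ (whose coefficients are constants) and preserves the support in $\Phi$, so it sends order $2$ operators to order $2$ operators.

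Next I would verify that $\sigma_c$ is compatible with each operation in \Cref{def:fully-2-solvable-operator}. Because $\sigma_c$ fixes $\Phi-1$ and $\Phi^{\pm i}$, we get $\sigma_c(L \circ (\Phi-1)) = \sigma_c(L)\circ(\Phi-1)$ and $\sigma_c(\Phi^i \circ L \circ \Phi^{-i}) = \Phi^i \circ \sigma_c(L)\circ \Phi^{-i}$ on the nose. Being a ring automorphism, $\sigma_c$ carries the left ideal $DL \cap DL'$ to $D\sigma_c(L)\cap D\sigma_c(L')$, hence, up to left multiplication by a unit of $D$, $\sigma_c(\LCLM(L,L')) = \LCLM(\sigma_c(L),\sigma_c(L'))$; likewise it sends a factorization $L = AB$ to $\sigma_c(L) = \sigma_c(A)\sigma_c(B)$, so right factors go to right factors. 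Compatibility with the symmetric product, $\sigma_c(L \cs L') = \sigma_c(L)\cs\sigma_c(L')$, is the one step that appeals to the construction of \Cref{section:symmetric-powers} rather than pure ring theory: it follows from the functoriality of the symmetric-power construction under the automorphism $\sigma_c$, which commutes with $\otimes$ and with passage to symmetric parts.

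The one operation that does not commute with $\sigma_c$ verbatim is \ref{item:operation-7}, and this is where the main care is needed. Substituting $x \mapsto x/m$ and $\Phi \mapsto \Phi^m$ in $L(x+c,\Phi)$ gives $L\big((x+c)/m, \Phi^m\big) = L\big(x/m + c/m, \Phi^m\big)$, so
\[
    \sigma_c\big(L(x/m,\Phi^m)\big) = \big(\sigma_{c/m}L\big)(x/m,\Phi^m),
\]
i.e.\ the shift constant rescales from $c$ to $c/m \in C$. This is harmless provided the inductive statement is quantified over all $c \in C$ at once. Accordingly, I would run the induction on the complexity of $L$ to prove the sharper claim: for every $c \in C$, $\sigma_c(L) \in \mathcal{F}$ and its complexity is at most that of $L$. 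The base case is the order $2$ operators, and the inductive step is exactly the compatibility list above, with \ref{item:operation-7} handled by applying the inductive hypothesis to $\sigma_{c/m}(L)$. Equivalently, one may argue by minimality: the set $\{L \in D : \sigma_c(L) \in \mathcal{F}\text{ for all } c \in C\}$ contains every order $2$ operator and is closed under \ref{item:operation-2}--\ref{item:operation-7}, hence contains $\mathcal{F}$.

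Finally, since each construction of $\sigma_c(L)$ produced above uses the same number and type of operations as the given construction of $L$, the complexity cannot increase; applying the same bound to $\sigma_{-c}$ and using $\sigma_{-c}\circ\sigma_c = \mathrm{id}$ yields the reverse inequality, so the complexity is preserved exactly. The only genuine subtlety is the rescaling in \ref{item:operation-7}, which forces the ``for all $c$'' quantifier in the induction; everything else reduces to the observation that $\sigma_c$ is a ring automorphism fixing $\Phi$.
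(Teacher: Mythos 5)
Your proof is correct and takes essentially the same route as the paper: an induction on complexity using that $\sigma_c$ commutes with operations \ref{item:operation-1}--\ref{item:operation-6} and only ``almost-commutes'' with \ref{item:operation-7} via $\sigma_c(L(x/m,\Phi^m)) = (\sigma_{c/m}L)(x/m,\Phi^m)$, which is exactly the paper's observation. Your write-up merely makes explicit what the paper leaves implicit---the ``for all $c \in C$'' quantifier in the inductive claim and the $\sigma_{-c}$ argument for exact complexity preservation---both of which are sound.
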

\begin{proof}
    Induct on the complexity of $L$, 
    noting that $\sigma_{c}\colon x \mapsto x+c$ commutes with \ref{item:operation-1}--\ref{item:operation-6} and almost-commutes with \ref{item:operation-7}, 
    i.e., if $S(L)\coloneqq L(x/m,\Phi^{m})$,
    then $\sigma_{c}(S(L))=S(\sigma_{c/m}(L))$.
\end{proof}

\begin{theorem}\label{thm:fully-2-solvable}
     If $L$ is in $\mathcal{F}$ then $L$ is fully $2$-solvable. In particular, any minimal operator of a $2$-expressible sequence is fully $2$-solvable.
\end{theorem}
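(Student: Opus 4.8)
The plan is to induct on the complexity of $L$, using the reformulation that $L$ is fully $2$-solvable if and only if $V_{\mathcal{S}}(L)\subseteq \mathcal{E}$: indeed $L$ has a full solution space in $\mathcal{S}$ by \cite[Theorem 8.2.1]{petkovvsek1997wilf}, so any basis of $2$-expressible solutions already spans all of $V_{\mathcal{S}}(L)$. The base case is complexity $0$, where $L$ has order $2$ and condition \eqref{liouvillian-4} in \Cref{def:eulerian} supplies a full solution space in $\mathcal{E}$ directly. For the inductive step, an operator of positive complexity arises by applying exactly one of the operations in \ref{item:operation-2}--\ref{item:operation-7} to operators of strictly smaller complexity, so it suffices to check that each operation preserves full $2$-solvability.

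The operations in \ref{item:operation-2} and \ref{item:operation-6} are handled by matching each one to a closure property of $\mathcal{E}$. If $t_1,\dots,t_n$ is a basis of $2$-expressible solutions of $L$, then: solutions of $L\circ(\Phi-1)$ come from indefinite sums $\Sigma t_j$ together with the constant $1$, all lying in $\mathcal{E}$ by \eqref{liouvillian-2}; solutions of $\Phi^{i}\circ L\circ \Phi^{-i}$ are the shifts $\phi^{i}(t_j)\in\mathcal{E}$; solutions of $\LCLM(L,L')$ form $V_{\mathcal{S}}(L)+V_{\mathcal{S}}(L')\subseteq\mathcal{E}$ by the identity $V_{R}(\LCLM(L,L'))=V_{R}(L)+V_{R}(L')$ for full solution spaces; and solutions of $L\cs L'$ are spanned by products $st$ with $s\in V_{\mathcal{S}}(L)$ and $t\in V_{\mathcal{S}}(L')$, which lie in $\mathcal{E}$ because $\mathcal{E}$ is a ring. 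In each case one checks that the resulting $2$-expressible solutions are linearly independent and as numerous as the order of the new operator, hence form a full basis. For a right factor $L_{2}$ of $L$ (operation \ref{item:operation-6}), every solution of $L_{2}$ is a solution of $L$, so $V_{\mathcal{S}}(L_2)\subseteq V_{\mathcal{S}}(L)\subseteq \mathcal{E}$ and full $2$-solvability is immediate.

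The crux is operation \ref{item:operation-7}, passing from $L=L(x,\Phi)$ to $M=L(x/m,\Phi^{m})$. Writing each sequence $s$ in terms of its $m$ sections $s^{(i)}(n)=s(mn+i)$, a direct computation shows that $M(s)=0$ if and only if each section $s^{(i)}$ satisfies the shifted operator $L(x+i/m,\Phi)$ for $0\le i<m$, the shift appearing because evaluating $x/m$ on the $i$-th section replaces $x$ by $x+i/m$. This is where \Cref{lemma:shift} becomes essential: it shows $L(x+i/m,\Phi)\in\mathcal{F}$ \emph{with the same complexity as $L$}, hence of complexity strictly less than that of $M$, so the inductive hypothesis supplies a basis of $2$-expressible solutions for each shifted operator. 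Interlacing such a solution with zeros and shifting it into the $i$-th section (using closure of $\mathcal{E}$ under \eqref{liouvillian-3} and under $\phi^{\pm1}$) produces a solution of $M$ in $\mathcal{E}$ supported on that single section; ranging over all $m$ sections and the $n$ basis solutions of each shifted operator yields $mn$ linearly independent $2$-expressible solutions, a full basis for $M$.

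I expect the main obstacle to be exactly this section/shift bookkeeping in operation \ref{item:operation-7}: one must recognize that the $m$ sections satisfy $m$ \emph{different} shifted operators rather than $L$ itself, so that closure of $\mathcal{E}$ under interlacing alone does not suffice and the preservation of complexity in \Cref{lemma:shift} is precisely what keeps the induction well-founded. Once all operations are verified, the first statement follows, and the ``in particular'' clause is then immediate from \Cref{thm:eulerian-group}, which guarantees that any minimal operator of a $2$-expressible sequence lies in $\mathcal{F}$.
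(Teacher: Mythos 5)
Your proposal is correct and follows essentially the same route as the paper's proof: induction on complexity, with each operation \ref{item:operation-2}--\ref{item:operation-7} matched to a closure property of $\mathcal{E}$ (indefinite sums, shifts, sums of solution spaces, products via \eqref{eq:sym-prod}, subspaces for right factors), the crucial use of \Cref{lemma:shift} to keep the induction well-founded for $L(x/m,\Phi^m)$, and the ``in particular'' clause from \Cref{thm:eulerian-group}. Your explicit remarks on the section/shift bookkeeping and on why complexity preservation matters are slightly more detailed than the paper's terse treatment, but the argument is the same.
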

\begin{proof}
    The first claim and \Cref{thm:eulerian-group} imply the second claim.
    To prove the first claim, induct on the complexity of $L$. The base case is clear. 
    For the inductive step, suppose that $L,L'\in \mathcal{F}$ satisfy $V_{\mathcal{S}}(L),V_{\mathcal{S}}(L')\subseteq \mathcal{E}$.
    By \Cref{lemma:shift}, $L(x+i/m,\Phi)\in \mathcal{F}$. By the inductive hypothesis, $V_{\mathcal{S}}(L(x+i/m,\Phi))\subseteq \mathcal{E}$.
    Let $L''$ be a right factor of $L$. 
    Then the induction step for \ref{item:operation-2}--\ref{item:operation-6} is verified by the following computations.
    \begin{equation*}
        \begin{aligned}
            &V_{\mathcal{S}}(L\circ (\Phi-1))=\bigcup_{s\in V_{\mathcal{S}}(L)}\Sigma s \quad\subseteq \mathcal{E}\\
            &V_{\mathcal{S}}(\Phi^{i}\circ L\circ \Phi^{-i})=\phi^{i}(V_{\mathcal{S}}(L))\subseteq \mathcal{E}\\
            &V_{\mathcal{S}}(\LCLM(L,L'))=V_{\mathcal{S}}(L)+V_{\mathcal{S}}(L')\subseteq \mathcal{E}\\
            &V_{\mathcal{S}}(L\cs L') \subseteq \mathcal{E}\\
            &V_{\mathcal{S}}(L'')\subseteq V_{\mathcal{S}}(L)\subseteq \mathcal{E}\\
        \end{aligned}
    \end{equation*}
    Here $V_{\mathcal{S}}(L\cs L')\subseteq \mathcal{E}$ uses equation \eqref{eq:sym-prod} in \Cref{section:symmetric-powers}.
    Finally $V_{\mathcal{S}}(L(x/m,\Phi^{m}))$ consists of interlacing sequences in $V_{\mathcal{S}}(L(x+i/m,\Phi))$ for $0\le i<m$, so it is also in $\mathcal{E}$.
\end{proof}

A question remains: is every operator with an Eulerian Galois group $2$-solvable? For order $3$ operators, this follows from the proof in \Cref{section:main}.

\section{Symmetric powers}
\label{section:symmetric-powers}

Let $M$ and $N$ be $D$-modules. The tensor product $M\otimes N$ over $F$ is a $D$-module via $\Phi (m\otimes n)\coloneqq \Phi (m)\otimes \Phi (n)$. The tensor power $M^{\otimes d}$ is naturally a $D$-module, as is the subset $S^{d}(M)\subseteq M^{\otimes d}$ of elements fixed by permuting the tensor factors.
We call $S^{d}(M)$ the $d$th \emph{symmetric power} of $M$. 

The \emph{symmetric product} $L\cs L'$ of operators $L$ and $L'$ is a minimal operator of $1\otimes 1$ in $M_{L}\otimes M_{L'}$. Like \cite[Corollary 2.19]{put2006galois}, we have
\begin{equation}\label{eq:sym-prod}
V_{\mathcal{S}}(L\cs L')=\spann_{C}\left\lbrace u\cdot v\,\middle\vert\, u\in V_{\mathcal{S}}(L), v\in V_{\mathcal{S}}(L')\right\rbrace.
\end{equation}
We let $L^{\cs 2}= L\cs L$.

We now state an analog of a result by Fano, Singer, van Hoeij--van der Put \cite[Proposition 3.1]{hoeij2006descent}, and Nguyen--van der Put \cite[Corollary 2.2]{nguyen2010solving} that determines when a $D$-module of dimension $3$ is an almost symmetric square.

\begin{proposition}[Symmetric Square Criterion]
\label{prop:sym-square}
  Let $F$ be a $C_{1}$-field. Let $M$ be an irreducible $D$-module of dimension $3$. Then $S^{2} (M)$ has a $D$-submodule of dimension $1$ if and only if $M\cong S^{2} (N)\otimes P$ for some $D$-modules $N$ and $P$ of dimensions $2$ and $1$, respectively.
\end{proposition}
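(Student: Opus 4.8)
The plan is to prove the two implications separately, the forward one resting on a quadratic-form argument that is where the $C_1$ hypothesis gets used. For the easy direction ($\Leftarrow$), suppose $M\cong S^2(N)\otimes P$. Since $P$ is one-dimensional, $S^2(A\otimes P)\cong S^2(A)\otimes P^{\otimes 2}$ for any module $A$, so $S^2(M)\cong S^2(S^2 N)\otimes P^{\otimes 2}$. The plethysm decomposition for a two-dimensional module furnishes a $D$-module isomorphism $S^2(S^2 N)\cong S^4(N)\oplus(\Lambda^2 N)^{\otimes 2}$, whose second summand is one-dimensional (it is the image of the discriminant). Tensoring with $P^{\otimes 2}$ then exhibits $(\Lambda^2 N)^{\otimes 2}\otimes P^{\otimes 2}$ as a one-dimensional $D$-submodule of $S^2(M)$; the only thing to check is that the displayed decomposition is $\Phi$-equivariant, which is immediate from the definition of $\Phi$ on tensor and symmetric powers.

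For the forward direction ($\Rightarrow$), I would first translate the hypothesis into the language of bilinear forms. A one-dimensional submodule $L\subseteq S^2(M)$ is spanned over $F$ by a nonzero symmetric tensor $q$ with $\Phi(q)=cq$ for some $c\in F^\times$. Viewing $q$ as a symmetric $F$-bilinear form $B$ on the dual module $M^*$, the relation $\Phi(q)=cq$ becomes a $\phi$-semilinear equivariance $B(\Phi f,\Phi g)=c\,\phi\bigl(B(f,g)\bigr)$. The key consequence is that the radical of $B$ is $\Phi$-stable, hence a $D$-submodule of $M^*$. Since $M$ is irreducible, so is $M^*$, and as $B\neq 0$ its radical cannot be all of $M^*$; therefore the radical is zero and $B$ is a nondegenerate ternary quadratic space over $F$.

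This is where the $C_1$ hypothesis enters. Because $F$ is $C_1$, the ternary form $B$ has a nontrivial zero, so $(M^*,B)$ is isotropic, and by Tsen's theorem the Brauer group of a $C_1$ field is trivial. I would then form the even Clifford algebra of $(M^*,B)$, a quaternion $F$-algebra, which therefore splits and is isomorphic to $M_2(F)$. Its unique simple module $N_0$ is a two-dimensional $F$-vector space, and the classical correspondence between split ternary quadratic spaces and two-dimensional spaces identifies $(M^*,B)$, up to a one-dimensional twist $\lambda$ by $\Lambda^2 N_0$, with $S^2(N_0)$ carrying its discriminant form; in particular $M^*\cong S^2(N_0)\otimes\lambda$ as $F$-vector spaces equipped with the form.

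The main obstacle, and the final step, is to make this identification $\Phi$-equivariant, upgrading $N_0$ to a genuine two-dimensional $D$-module. The semilinear operator $\Phi$ on $M^*$ transports through the Clifford construction to a $\phi$-semilinear ring automorphism of $M_2(F)$; by Skolem--Noether every $F$-algebra automorphism of $M_2(F)$ is inner, so this semilinear automorphism is the composite of the entrywise $\phi$ with conjugation by some $g\in\GL_2(F)$. I would use $g$ to define a $\phi$-semilinear map $\Phi_N$ on $N_0$, well defined up to an $F^\times$-scalar, and this scalar ambiguity is precisely what a one-dimensional $D$-module $P'$ absorbs. The resulting two-dimensional $D$-module then satisfies $M^*\cong S^2(N_0)\otimes P'$, and dualizing gives $M\cong S^2(N_0^*)\otimes (P')^*$, so one sets $N=N_0^*$ and $P=(P')^*$. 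I expect the bookkeeping in this descent---verifying that the Clifford and Skolem--Noether constructions genuinely respect $\Phi$ and that the residual scalar assembles into a $D$-module---to be the technical heart of the argument.
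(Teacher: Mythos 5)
Your proposal is correct, and both directions line up with the paper's statement, but your argument for the forward implication takes a genuinely different route. The paper, after establishing nondegeneracy of $q$ exactly as you do (your radical argument is word-for-word the paper's ``alternative'' argument: the radical is $\Phi$-stable, hence a $D$-submodule of the irreducible module $M^*$, hence zero), uses the $C_1$ hypothesis via Witt decomposition to put $q$ in the normal form $x_2^2 - x_1x_3$, and then finishes by brute force: writing $\Phi(x_i)=\sum_j g_{ij}x_j$, extracting polynomial relations on the $g_{ij}$ from $\Phi(q)=rq$, and exhibiting explicit formulas $b=g_{11}$, $a_{11}=1$, $a_{12}=g_{12}/2g_{11}$, $a_{21}=g_{21}/g_{11}$, $a_{22}=g_{32}/2g_{21}$ (after a case analysis on zero patterns of $(g_{ij})$), with the verification that these satisfy $(g_{ij})=b\cdot S^2(a_{ij})$ delegated to a Gr\"obner basis computation in Appendix A. You instead invoke the even Clifford algebra: the $C_1$ hypothesis makes the ternary form isotropic, so $C_0(M^*,B)\cong M_2(F)$, and Skolem--Noether classifies the $\phi$-semilinear automorphism induced by $\Phi_{M^*}$ as conjugation by some $g\in\GL_2(F)$ composed with entrywise $\phi$; this $g$ defines $\Phi_N$ up to the scalar that $P$ absorbs. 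In effect you are proving abstractly that every (semilinear) similitude of the split ternary form is a scalar times a symmetric square of a $2\times 2$ matrix --- precisely the content of the paper's coordinate computation, i.e.\ the isomorphism $\mathrm{PGO}_3\cong\PGL_2$. The one step you leave implicit, and which deserves a sentence in a full write-up, is that two similitudes of $(M^*,B)$ inducing the same automorphism of $C_0$ differ exactly by an $F^\times$-scalar (the kernel of $\mathrm{GO}_3\to\mathrm{Aut}(C_0)$ consists of scalars); this is standard and is what justifies identifying $\Phi_{M^*}$ with $u\cdot S^2(\Phi_N)\otimes(\cdots)$ for a single $u\in F^\times$. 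The trade-off: your argument is conceptual, avoids the case analysis and the Gr\"obner computation entirely, and explains structurally why the statement is true; the paper's argument is elementary and fully explicit, which matters because the authors need constructive formulas for $N$ and $P$ in the companion algorithm paper. Your plethysm refinement of the easy direction (identifying the kernel of $S^2(S^2N)\twoheadrightarrow S^4N$ as $(\Lambda^2N)^{\otimes 2}$) is also correct in characteristic zero, though the paper needs only that the kernel is one-dimensional.
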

\begin{proof}
$\Leftarrow$: The canonical $D$-module epimorphism $S^2(S^2(N))\twoheadrightarrow S^4(N)$ has a kernel of dimension $1$.
Hence $S^2(M) \cong S^2(S^2(N)) \otimes S^2(P)$ has a submodule of dimension $1$ as well.

$\Rightarrow$: Let $q\in S^2(M)$ span a $D$-submodule of dimension $1$, i.e. $\Phi (q) = rq$ for some $r\in F^{*}$. We will show that $q$ is irreducible (as quadratic polynomial in an $F$-basis $x_1, x_2, x_3$ of $M$) which is equivalent to saying that the associated bilinear form is nondegenerate. One way to see that $q$ is irreducible is because otherwise, its factors would generate a nontrivial submodule of $M$, but $M$ is irreducible.

(Alternatively,
identify $q$ with its associated symmetric bilinear form on $M^{*}$. Recall that $M^*\coloneqq \Hom_F(M,F)$ is a $D$-module under the conjugation action $\Phi(l)\coloneqq \Phi_F\circ l\circ \Phi_M^{-1}$ and enjoys properties analogous to those of its differential counterpart in \cite[page 45]{put2003galois}.
The computation
\[
0=\Phi(q(x,\Phi^{-1}(y)))=\Phi(q)(\Phi(x),y)=r\cdot q(\Phi(x),y)\quad\text{for all $y\in M^*$}
\]
verifies that $Q\coloneqq\{x\in M^{*}\mid q(x,M^{*})=0\}$ is a $D$-submodule of $M^*$. Also $q$ is nonzero so $Q$ cannot be all of $M^*$.
Since $M^*$ is irreducible, $Q=0$. 
Therefore $q$ is nondegenerate.) 

This implies that the quadratic linear form can be reduced to $x_2^2-x_1x_3$. This is well known, but we also give an explicit algorithm in \cite{algo}, where a sequence of elementary linear transformations rewrites $q$ to an $F$-multiple of $x_2^2 - x_1 x_3$ in some $F$-basis $x_1,x_2,x_3$ of $M$.
Alternatively, Witt's decomposition theorem \cite[page 12]{lam2005introduction} makes $q$ an $F$-multiple of the sum of an anisotropic form $x_{2}^{2}$ and a hyperbolic form $-x_1 x_3$. 

Now that we reduced $q$ to $x_2^2 - x_1 x_3$, we will use this form to give explicit equations constraining the structure constants of the difference action on $M$. Write $\Phi (x_{i}) = \sum_j g_{ij}x_{j}$ with $(g_{ij}) \in \GL_3(F)$. 
Since $q$ spans a $1$-dimensional submodule, we have $\Phi(x_2^2 - x_1 x_3) = r ( x_2^2 - x_1 x_3)$ for some $r \in F^*$.
Matching $x_i x_j$-coefficients gives the following relations.
\begin{equation}
\begin{aligned} 
\label{eq:g-relations}
[x_{1}^{2}]&:& 0&= g_{21}^{2}-g_{11}g_{31}\\
[x_{3}^{2}]&:& 0&= g_{23}^{2}-g_{13}g_{33}\\
[x_{1}x_{2}]&:& 0&= 2g_{21}g_{22}-g_{11}g_{32}-g_{31}g_{12}\\
[x_{2}x_{3}]&:& 0&= 2g_{22}g_{23}-g_{12}g_{33}-g_{32}g_{13}\\
[x_{2}^{2}] + [x_{1}x_{3}]&:& 0&= (g_{22}^{2}-g_{12}g_{32}) + (2g_{21}g_{23}-g_{11}g_{33}-g_{31}g_{13})\\
\end{aligned}
\end{equation}
Here, the sum $[x_{2}^{2}] + [x_{1}x_{3}]$ is used to eliminate $r$.

We now decompose $M$ by formally defining vector spaces $N$ and $P$ and giving them difference structures so that $S^{2}(N)\otimes P\to M$ is a $D$-module isomorphism. Let $N$ and $P$ be $F$-vector spaces with respective bases $\{n_{1},n_{2}\}$ and $\{p\}$. The $F$-linear map $S^{2}(N)\otimes P\to M$ taking $n_{i}n_{j}\otimes p$ to $m_{ij}$ will be a $D$-module isomorphism if we can find $a_{ij},b\in F$ defining actions $\Phi (n_{i}) = \sum a_{ij}n_{j}$ on $N$ and $\Phi (p) = b\cdot p$ on $P$ that satisfy
\begin{equation}
\label{eq:g-a-relations}
    \begin{bmatrix}
        g_{11}& g_{12} & g_{13}\\
        g_{21}& g_{22} & g_{23}\\
        g_{31}& g_{32} & g_{33}\\
    \end{bmatrix}
    =b
    \begin{bmatrix}
        a_{11}^{2}& 2a_{11}a_{12} & a_{12}^{2}\\
        a_{11}a_{21}& a_{11}a_{22}+a_{12}a_{21} & a_{12}a_{22}\\
        a_{21}^{2}& 2a_{21}a_{22} & a_{22}^{2}\\
    \end{bmatrix}.
\end{equation}

We now show that the matrix $(g_{ij})$ is diagonal, skew-diagonal, or is not of the forms $A,B,C,D$, where
\[
    A=\begin{bmatrix}
    0 & * & 0\\
    * & * & *\\
    0 & * & 0\\
    \end{bmatrix},\;
    B=\begin{bmatrix}
    0 & * & *\\
    * & * & 0\\
    0 & * & *\\
    \end{bmatrix},\;
    C=\begin{bmatrix}
    * & * & 0\\
    0 & * & *\\
    * & * & 0\\
    \end{bmatrix},\;
    D=\begin{bmatrix}
    * & * & *\\
    0 & * & 0\\
    * & * & *\\
    \end{bmatrix}.
\]
To see this, suppose $(g_{ij})$ is of the form $A$, $B$, $C$, or $D$. Equations $[x_1^2]$ and $[x_3^2]$ of \eqref{eq:g-relations} now force $A$, $B$, and $C$ to have a zero column, contradicting the fact that $(g_{ij})$ is invertible. Both \eqref{eq:g-relations} and $\det(g_{ij})\neq 0$ force $D$ to be diagonal or skew-diagonal. Therefore, $(g_{ij})$ is diagonal or skew-diagonal, or else one of $g_{11}g_{21}\neq 0$, $g_{21}g_{31}\neq 0$, $g_{13}g_{23}\neq 0$, or $g_{23}g_{33}\neq 0$ holds.

Since vertical and horizontal reflections of the square matrices $(a_{ij})$ and $(g_{ij})$ preserve the form of \eqref{eq:g-relations} and \eqref{eq:g-a-relations}, 
we may further assume $(g_{ij})$ is diagonal or $g_{11}g_{21}\neq 0$. In the former case, 
set $b=g_{11}$, $a_{11}=1$, $a_{22}=g_{22}/g_{11}$, and $a_{12}=a_{21}=0$. In the latter case, set
$b= g_{11}$, 
$a_{11}= 1$, $a_{12}= g_{12}/2g_{11}$, 
$a_{21}= g_{21}/g_{11}$, and
$a_{22}= g_{32}/2g_{21}$; for this choice of $a_{ij}, b$,
the relations \eqref{eq:g-a-relations} follow from \eqref{eq:g-relations} by a Gröbner basis computation in \Cref{appendix:groebner}.
\end{proof}

\section{Restriction and induction}
\label{section:restriction-induction}
We now define representation-theoretic operations on difference modules.

\begin{definition}\label{def51} Let $s,t$ be positive integers with $s$ dividing $t$. Consider the noncommutative subrings $D_s=F[\Phi^{s},\Phi^{-s}]$ and $D_t=F[\Phi^{t},\Phi^{-t}]$ of $D$.
\begin{enumerate}
    \item The \emph{restriction} of a $D_s$-module $M$ to $D_{t}$ is defined as the $D_{t}$-module $M$ and denoted by $M\pda^{s}_{t}$.
    \item The \emph{induction} of a $D_{t}$-module $N$ to $D_s$ is defined as the $D_s$-module $D_s\otimes_{D_t}N$ and denoted by $N\pua^{s}_{t}$.
    \item The \emph{restriction} $L\pda^{1}_{t}$ of an operator $L$ is a minimal operator of $1 \in M\pda^{1}_{t}$. $L\pda^{1}_{t}$
    generates the left ideal $D_{t}\cap DL \subseteq D_{t}$.
\end{enumerate}
\end{definition}

\begin{remark}[Basic properties]
\label{remark:basic-properties}
    Let $r | s$ and $s | t$. It follows from definition that $(M\pda^r_s)\pda^s_t=M\pda^r_t$ and that 
    \begin{equation}
    \label{eq:basic:2}
    (N\pua_t^s)\pua_s^r
    =D_r\otimes_{D_s}\left(D_{s}\otimes_{D_t}N\right)
    = D_r\otimes_{D_t}N
    =N\pua^r_t.
    \end{equation}
    Since $\{1, \Phi^{r},\Phi^{2r},\dots,\Phi^{s-r}\}$ is a basis of $D_r$ as a left $D_s$-module and since $\Phi^{ir}\otimes_{D_s} N$ is $D_s$-stable within $D_r\otimes_{D_s} N$, we additionally have
    \begin{equation}
    \label{eq:basic:3}
        N\pua_r^s\pda^r_s\cong (1\otimes N)\oplus(\Phi^{r}\otimes N)\oplus(\Phi^{2r}\otimes N)\oplus\cdots\oplus(\Phi^{s-r}\otimes N).
    \end{equation}
    Combining \eqref{eq:basic:2} and \eqref{eq:basic:3}, we further get
    $$N\pua_t^r\pda^r_s = N\pua^{s}_{t}\pua^{r}_{s}\pda^{r}_{s} = \bigoplus_{i=0}^{s/r-1} (\Phi^{ir}\otimes N\pua^{s}_{t}).
    $$
    For a generalization where $r$ need not divide $s$, see \Cref{thm:mackey-formula}.
\end{remark}

\begin{remark} \label{remark:basic-logic}
It suffices to focus on $M\pda^1_t$ and $N\pua^1_{t}$ because results are easy to generalize to $M\pda^{s}_{t}$ or $N\pua^{s}_{t}$.  After all, to generalize a result from $D_1 = F[\Phi,\Phi^{-1}]$ to $D_s=F[\Phi^s,\Phi^{-s}]$, all we have to do is replace an automorphism $\Phi$ on $F$ with another automorphism $\Phi^s$.
\end{remark}

Restriction and induction have natural interpretations in terms of difference operators and matrix difference equations.

Let $M$ be a $D$-module. If $\phi (Y)=AY$ is the associated matrix equation of $M$ in a given basis $\mathcal{B}$, then $\phi^{t}(Y)=A_{t}Y$ is the associated matrix equation of $M\pda^{1}_{t}$ in the same basis $\mathcal{B}$, where
\[
    A_{t}\coloneqq\phi^{t-1}(A)\cdots\phi^{2}(A)\phi(A)A.
\]
If $m$ is a cyclic vector of $M$, then $m\in M\pda^{1}_{t}$ need not be a cyclic vector, and so the operator $L\pda^{1}_{t}$ only corresponds to a $D_{t}$-submodule of $M\pda^{1}_{t}$.

Now let $N$ be a $D_{t}$-module. If $\phi^{t} (Y)=BY$ is the associated matrix equation of $N$ in basis $\mathcal{C}$, then $\bigcup_{i=0}^{t-1}\Phi^{i}\otimes \mathcal{C}$ is a basis of $N\pua^{1}_{t}$. In this basis, the associated matrix equation is
\[
    \phi(Y)=\tilde{B}Y,\qquad \tilde{B}\coloneqq
    \begin{bmatrix}
        0 & I_{(t-1)d}\\
        B & 0
    \end{bmatrix},
\]
where $d=\dim_{F} N$.

A cyclic vector $n$ of $N$ is also a cyclic vector of $N\pua^{1}_{t}$. So if $R$ is the minimal operator of $n\in N$ (so in particular $R\in D_t$), then $R$ is also the minimal operator of $n\in N\pua^1_t$, except now we view $R$ as an element of $D$.

The table below summarizes the relationship we just discussed about difference modules, matrix difference equations, and difference operators.

\begin{table}[H]
\centering
\renewcommand\tabcolsep{1em}
\renewcommand\arraystretch{1.5}
\begin{tabular}[t]{lll}
    \toprule
    Module & Matrix Equation & Operator \\ 
    \midrule
    $M$ & $\phi(Y)=AY$ & $L\in D$ \\ 
    $M\pda^{1}_{t}$ & $\phi^{t}(Y)=A_{t}Y$ & $L\pda^{1}_{t}\in D_{t}$ \text{if $m\in M\pda^{1}_{t}$ is cyclic} \\
    $N$ & $\phi^{t}(Y)=BY$ & $R\in D_{t}$ \\
    $N\pua^{1}_{t}$ & $\phi(Y)=\tilde{B}Y$ & $R\in D$ \\
    \bottomrule   
\end{tabular}
\end{table}

The formulas $L\pda^{1}_{t}$, $\phi^{t}(Y)=A_{t}Y$, and a variant of $\phi(Y)=\tilde{B}Y$ already appear in, say, \cite[Lemma 5.3]{hendricks1999solving} as $P(\phi^{m})$, \cite[page 20]{put2006galois}, and \cite[page 245]{hendricks1999solving}.
What this means is that induction and restriction have been in use for a long time in various guises. \Cref{prop:facts} translates some of these facts into the language of difference modules for later use. But first, we make an important definition.
\begin{definition}
\label{def:absolute-irreducibility}
    A $D_s$-module $M$ is \emph{absolutely irreducible} if $M\pda^s_t$ is irreducible for all multiples $t$ of $s$. An operator $L$ is \emph{absolutely irreducible} if $M = D/DL$ is absolutely irreducible.
\end{definition}

For difference and differential operators, $L$
is absolutely irreducible when its solution space is 
an irreducible $G^\circ$-module, where $G^\circ$ is the connected component of the identity of the Galois group. In the differential case this occurs when $L$ is irreducible over algebraic extensions
of $\mathbb{C}(x)$. In the difference case, such extensions are replaced with $L\pda^1_t$ (See Appendix C).

\begin{proposition}
    \label{prop:facts} Let $(F,\phi)=(C(x),x\mapsto x+1)$ and $M$ be a $D$-module.
    \begin{enumerate}
        \item \label{fact-1} If $\dim_{F} M>1$ and $\Gal(M)$ is solvable, then $M\pda^{1}_{t}$ is reducible for some $t\ge 1$ (i.e., $M$ is not absolutely irreducible).
        \item \label{fact-2} For some $t\ge 1$, the total Picard-Vessiot ring of $M\pda^{1}_{t}$ is a difference field $(K,\phi^{t})$ and $\Gal(M\pda^{1}_{t})=\Gal(M)^{\circ}$.
    \end{enumerate}
\end{proposition}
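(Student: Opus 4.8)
The plan is to prove part~(\ref{fact-2}) first, since part~(\ref{fact-1}) follows from it by a Lie--Kolchin argument. Let $R$ be a Picard--Vessiot ring of $M$ over $(F,\phi)$ with fundamental matrix $Z$ satisfying $\phi(Z)=AZ$, and set $G=\Gal(M)$. I would start from the standard structure theorem for difference Picard--Vessiot rings \cite[Corollary 1.16]{put2006galois}: $R$ decomposes as a finite product $R=e_0R\oplus\cdots\oplus e_{t-1}R$ of its components, where the primitive idempotents $e_i$ are cyclically permuted by $\phi$ (so $\phi(e_i)=e_{i+1}$ with indices mod $t$), each $(e_iR,\phi^t)$ is a domain that is $\phi^t$-simple with field of constants $C$, and $R$ is a domain precisely when $t=1$, i.e.\ when $G$ is connected. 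The crucial observation is that the \emph{same} matrix $Z$ satisfies $\phi^t(Z)=A_tZ$ with $A_t=\phi^{t-1}(A)\cdots\phi(A)A$, which is exactly the matrix equation of $M\pda^1_t$ over $(F,\phi^t)$; here one uses that $\phi^t\colon x\mapsto x+t$ still fixes precisely $C$, which is why the hypothesis $(F,\phi)=(C(x),x\mapsto x+1)$ matters. Projecting to $e_0R$ gives a fundamental matrix $e_0Z$ with unit determinant $e_0\det Z$, and $e_0R=F[e_0Z,(e_0\det Z)^{-1}]$, so $(e_0R,\phi^t)$ is a Picard--Vessiot ring for $M\pda^1_t$. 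Being a domain, its total ring of fractions is a field $(K,\phi^t)$, which gives the first assertion of part~(\ref{fact-2}).

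For the Galois groups, I would consider the stabilizer $G_0=\{\sigma\in G : \sigma(e_0)=e_0\}$ of the component $e_0R$. Since every $\sigma\in G$ commutes with $\phi$, the identity $\sigma(e_i)=\sigma\phi^i(e_0)=\phi^i\sigma(e_0)$ shows that $\sigma$ permutes the idempotents by a translation, yielding a homomorphism $G\to\mathbb{Z}/t$ with kernel $G_0$; hence $G_0$ has finite index in $G$. Restriction $\sigma\mapsto\sigma|_{e_0R}$ identifies $G_0$ with $\Gal(M\pda^1_t)$: it is injective because a $\phi$-commuting $\sigma$ is determined on every component $e_iR=\phi^i(e_0R)$ by its restriction to $e_0R$, and surjective by extending any $\tau$ through $\phi^i\tau\phi^{-i}$ on $e_iR$. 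Because $e_0R$ is a domain, the connectedness criterion shows $\Gal(M\pda^1_t)=G_0$ is connected; a connected closed subgroup of finite index is open, and a connected group has no proper open subgroup, so $G_0=G^\circ$. This proves $\Gal(M\pda^1_t)=\Gal(M)^\circ$ and finishes part~(\ref{fact-2}).

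For part~(\ref{fact-1}), take the $t$ produced by part~(\ref{fact-2}), so that $\Gal(M\pda^1_t)=G^\circ$ and the total Picard--Vessiot ring $K$ is a field. Since $G$ is solvable, so is its subgroup $G^\circ$, which is moreover connected, so by the Lie--Kolchin theorem it stabilizes a line in the solution space $V_K(M\pda^1_t)$. The module $M\pda^1_t$ has a cyclic vector because $x\in F$ satisfies $\phi^{tm}(x)\neq x$ for all $m\ge 1$ \cite[Theorem B.2]{hendricks1999solving}, so Proposition~\ref{prop:solution-space-correspondence} applies: $V_K(M\pda^1_t)$ is a full solution space of dimension $n=\dim_F M>1$, and the invariant line corresponds to a one-dimensional quotient of $M\pda^1_t$. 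The kernel of this quotient is a submodule of dimension $n-1\ge 1$, so $M\pda^1_t$ is reducible, and by Definition~\ref{def:absolute-irreducibility} this means $M$ is not absolutely irreducible.

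I expect the main obstacle to be part~(\ref{fact-2}), specifically matching the component $e_0R$ with the Picard--Vessiot ring of $M\pda^1_t$ and verifying $G_0=G^\circ$. The delicate points are the constant-field computation $(e_0R)^{\phi^t}=C$ and the input ``$R$ is a domain $\iff$ $G$ is connected'' (applied in the $\phi^t$-setting to conclude $G_0$ is connected); once these are in hand, the index and openness argument, and the Lie--Kolchin step of part~(\ref{fact-1}), are routine.
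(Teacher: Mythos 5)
Your proposal is correct, but it does not follow the paper's route throughout, so a comparison is in order. For part~(\ref{fact-2}) you and the paper do essentially the same thing: both start from the decomposition of the Picard--Vessiot ring into $t$ domains cyclically permuted by $\phi$ \cite[Corollary 1.16]{put2006galois}. The paper then simply cites \cite[Lemma 1.26]{put2006galois} for the fact that one component is a Picard--Vessiot ring of $M\pda^{1}_{t}$ (which you reprove by hand via the fundamental matrix $e_0Z$), and cites \cite[Proposition 1.20]{put2006galois} --- the identification of the Picard--Vessiot ring with the coordinate ring of the Galois group after base change --- to conclude $\Gal(M\pda^{1}_{t})=\Gal(M)^{\circ}$ in one stroke, where you instead run an explicit stabilizer argument $G_0=\{\sigma\in G:\sigma(e_0)=e_0\}$. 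Note that your route still needs Proposition 1.20 (or an equivalent) twice: once for ``$e_0R$ a domain $\Rightarrow$ $\Gal(e_0R/F)$ connected,'' and once, implicitly, to know that the isomorphism $G_0\cong \Gal(e_0R/F)$ and the inclusion $G_0\le G$ are morphisms of \emph{algebraic} groups, so that connectedness transfers and the finite-index/openness argument applies; an abstract group isomorphism would not suffice here, and this is exactly the technical debt the paper's citation discharges. The genuine divergence is part~(\ref{fact-1}): the paper proves it independently of part~(\ref{fact-2}) by quoting Hendriks--Singer --- solvable Galois group $\Rightarrow$ a Liouvillian solution \cite[Theorem 3.4]{hendricks1999solving} $\Rightarrow$ a solution that is an interlacing of hypergeometric sequences \cite[Theorem 5.1]{hendricks1999solving} $\Rightarrow$ $L$ gauge equivalent to $\Phi^{t}+f$ \cite[Corollary 4.3]{hendricks1999solving} --- which exhibits the explicit one-dimensional quotient $D_t/D_t(\Phi^{t}+f)$ of $M\pda^{1}_{t}$. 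You instead deduce~(\ref{fact-1}) from~(\ref{fact-2}) via Lie--Kolchin applied to the connected solvable group $G^{\circ}$ acting on the full solution space, then convert the invariant line into a one-dimensional quotient using Proposition~\ref{prop:solution-space-correspondence} (after checking the cyclic-vector hypothesis, which you do). Your version is shorter, needs no case split on reducibility of $M$, and stays entirely inside the Galois-theoretic framework; what the paper's version buys is concreteness --- it identifies the quotient as $\Phi^{t}+f$, precisely the shape appearing in Case~(b) of the main theorem --- and it keeps~(\ref{fact-1}) logically independent of the structure theory used in~(\ref{fact-2}). Both arguments may produce different values of $t$, but since the statement only asserts existence, this is immaterial.
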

\begin{proof}
    \eqref{fact-1}:
    We are done if $M$ is reducible. Suppose $M$ is irreducible with an associated operator $L$.
    Since $\Gal (L)$ is solvable, some nonzero solution of $L$ is a Liouvillian sequence \cite[Theorem 3.4]{hendricks1999solving}. 
    By \cite[Theorem 5.1]{hendricks1999solving}, some nonzero solution of $L$ is an interlacing of hypergeometric sequences. 
    By \cite[Corollary 4.3]{hendricks1999solving}, 
    $L$ is gauge equivalent to some $\Phi^{t}+f$ for some $f\in F$. 
    Thus $D_t/D_t(\Phi^t+f)$ is a $1$-dimensional quotient of $(D/DL)\pda^1_t\cong M\pda^1_t$. Because $\dim_FM>1$, we have just found a nontrivial quotient of $M\pda^1_t$, making $M\pda^1_t$ reducible.

    \eqref{fact-2}: 
    The Picard-Vessiot ring $S$ of $M$ has the form $\bigoplus_{i=0}^{t-1}\phi^{i}(R)$ for some difference ring $(R,\phi^{t})$ where $R$ is an integral domain.
    By \cite[Lemma 1.26]{put2006galois}, $M\pda_{t}^{1}$ has the Picard-Vessiot ring $(R,\phi^{t})$ so we set $K= \Frac(R)$.

    By \cite[Proposition 1.20]{put2006galois}, $S$ (resp. $R$) is isomorphic to the coordinate ring of the algebraic group representing $\Gal(M)$ (resp. $\Gal(M\pda_{t}^{1})$) after base change to $F$. Since $R$ is a connected component of $S$, we have $\Gal(M\pda_{t}^{1})=\Gal (M)^{\circ}$.
\end{proof}

\subsection{Examples of absolute (ir)reducibility} \label{section:6}

Let $M = D/DL$. The papers \cite{ singer1985solving, hessinger2001computing,person2002solving} study when a {\em differential} equation is 2-solvable. For order 4, one of the cases to check is if $M \cong M_1 \otimes M_2$ for some $M_1$ and $M_2$ with $\dim(M_1)=\dim(M_2)=2$. Let's call this the {\em product-case}. One can detect the product-case by factoring the exterior square: check if $\bigwedge^2(M)\cong N_1 \oplus N_2$ for some $N_1$ and $N_2$ of dimension $3$ and then write $N_1$ and $N_2$ as symmetric squares of modules of dimension 2.

To develop such results for the difference case, there is subtlety that needs to be studied, which we illustrate with an example.


\begin{example} \label{A227845}

In the OEIS database, the sequence A227845 has an order $4$ minimal operator $$L=(x+4)^2\Phi^4-2(3x^2+21x+37)\Phi^3+2(3x^2+15x+19)\Phi-(x+2)^2.$$

Now $M$ and $\bigwedge^2(M)$ are irreducible, but $\bigwedge^2(M \pda^{1}_{2})$ is reducible.
So this example falls in the product-case, but only after restriction (Definition~\ref{def51}).
We managed to find the following product:
\begin{equation} \label{prodU}
    A227845(n) = U(n-1)U(n)
\end{equation}
where $U(-1) = U(0)=1, U(1)=2, U(2)=7/2,$ and 
\begin{equation}n^2U(n) = 2(3n^2-3n+1)U(n-2) - (n-1)^2U(n-4) \label{ord4rec} \end{equation}

Does \eqref{prodU} write A227845 as a product of solutions of lower order recurrences? The answer is: it depends.

Strictly speaking, \eqref{ord4rec} is a recurrence for $U(n)$ of order 4. However, it is also a recurrence for $v_0(n) := U(2n)$ of order 2. Likewise for $v_1(n) := U(2n-1)$.  So if we {\em restrict} A227845 to even-numbered terms then equation~\eqref{prodU} indeed writes $A227845(2n)$ as a product $v_1(n) v_0(n)$ whose factors satisfy second order recurrences. And likewise for $A227845(2n+1) = v_0(n)v_1(n+1)$.  So A227845 is an interlacing of two sequences, each of which is a product of solutions of second order equations.

In summary: A227845 is not a product of lower-order sequences, but it becomes one if we restrict it to even-numbered terms, or to odd-numbered terms.
Likewise $\bigwedge^2(M)$ is an irreducible $D$-module, but becomes reducible when restricted to a $D_2$-module.
\end{example}

\begin{example} In the OEIS database, both the sequences A105151 and A105216 satisfy an order $4$ operator $$L=\Phi^4-(x+4)\Phi^3-(x+3)\Phi-1.$$ 

Now $\bigwedge^2(M)$ is again irreducible, but $\bigwedge^2(M \pda_{2}^{1}) \cong N_1 \oplus N_2$ for some $N_1$ and $N_2$ of order $3$. We used \cite{bou2024solving} to write $N_1$ and $N_2$ as symmetric squares, reducing the orders to two. We then found solutions in terms of Bessel functions using \cite{cha2010solving}. Putting everything together we found: \begin{equation} \label{eq1} A105151(n) = \round\Bigg(c\BesselK\Big(\frac{n}{2}+\frac{1}{2},1\Big)\BesselK\Big(\frac{n}{2}+1,1\Big)\Bigg)\end{equation}
%
%
where
   $$c = \sqrt{\frac{2}{\pi}}\Bigg(\cosh(1)\BesselI(0,1) + \sinh(1)\BesselI(1,1)\Bigg).$$
The formula for A105216($n$) is the same as~(\ref{eq1}) except this time:
	$$c = \sqrt{\frac{2}{\pi}}\Bigg(\sinh(1)\BesselI(0,1)+\exp(-1)\BesselI(1,1)\Bigg).$$ 

\end{example}

\subsubsection{Motivation of the Paper}

Can one always decide if a recurrence of order 3 or 4 is 2-solvable or not?
This leads to the following questions:

\begin{enumerate} 
\item Which cases can occur? 

\item How to solve those cases? 

\item How to {\em prove} non-2-solvability for recurrences that we do not solve?\end{enumerate}

\begin{remark}[Remarks on questions (1), (2), and (3)]\hfill

\begin{enumerate}
\item Let $N=\bigwedge^2(M)$ with $M$ as in \Cref{A227845}. Then $N$ is irreducible and $N \pda_{2}^{1}$ is reducible. This raises the following question.

Suppose that $N$ is not absolutely irreducible, in other words, there exists $p$ for which $N \pda_{p}^{1}$ is reducible. Which values of $p$ would need to be checked? This motivates \Cref{section:semisimplicity}.

\item For order $3$, see \cite{bou2024solving}. Order $4$ is work in progress; we need to develop an algorithm for each potential case, one of which was illustrated in \Cref{A227845}.  

\item This is the most difficult question. The only way to answer it in the differential case was through Galois theory. The main goal in this paper is to tackle this problem for the difference case. We complete this task for order~3 in \Cref{section:main} while \Cref{section:semisimplicity} gives theory that, as illustrated in the examples, will be needed for order~4.
\end{enumerate}
The three tasks are quite different in nature.  If we limit ourselves to say orders 3 and 4, then task (1), to plausibly list all possible cases, is the easiest task.
%
Task (2) is to design an algorithm for each case.
The main goal in this paper is to develop theory needed for task~(3).
\end{remark}

\section{Semisimplicity and absolute irreducibility}
\label{section:semisimplicity}
We now develop some representation theory in the context of difference modules to decompose irreducible modules into absolutely irreducible modules.

As with differential modules \cite[Exercise 2.38 (4)]{put2003galois}, the tensor product of two semisimple $D$-modules is again semisimple.
Therefore the class of semisimple $D$-modules is closed under linear algebra operations like taking submodules, quotients, tensor products, symmetric powers, and direct sums. 
We now show that restriction and induction preserve semisimplicity as well. 
See \cite[Chapter 17]{lang2012algebra} for general facts on semisimplicity.

\begin{proposition}\label{prop:semisimple}
    Let $M$ be an irreducible $D_s$-module and $N$ an irreducible $D_t$-module, with $s|t$. Then $M\pda^{s}_{t}$ and $N\pua^{s}_{t}$ are semisimple.
\end{proposition}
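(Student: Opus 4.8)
The plan is to first reduce to the case $s=1$. By \Cref{remark:basic-logic}, any statement relating $D_1=F[\Phi,\Phi^{-1}]$ to $D_t$ transfers to the pair $D_s,D_t$ simply by replacing the automorphism $\phi$ with $\phi^s$ (equivalently $\Phi$ with $\Phi^s$). So it suffices to prove that $M\pda^1_t$ is semisimple when $M$ is an irreducible $D$-module, and that $N\pua^1_t$ is semisimple when $N$ is an irreducible $D_t$-module. Throughout I would exploit that $D_t\subseteq D$ behaves exactly like the index-$t$ subgroup $t\mathbb{Z}$ of the abelian group $\mathbb{Z}$: conjugation by $\Phi$ normalizes $D_t$, since $\Phi^i\Phi^{\pm t}\Phi^{-i}=\Phi^{\pm t}$ and $\Phi^i f\Phi^{-i}=\phi^i(f)\in F$, so each $d\mapsto\Phi^i d\Phi^{-i}$ is a ring automorphism of $D_t$. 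This puts us squarely in a Clifford-theory situation with cyclic quotient $\mathbb{Z}/t\mathbb{Z}$.

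For the restriction $M\pda^1_t$ I would run the standard Clifford argument. As $M\pda^1_t$ is finite dimensional over $F$, it is an Artinian $D_t$-module and so contains a simple $D_t$-submodule $V$. For each $i\in\mathbb{Z}$ the conjugate $\Phi^i V$ is again a $D_t$-submodule: one checks $\Phi^{t}(\Phi^iV)=\Phi^i(\Phi^tV)=\Phi^iV$ and $f(\Phi^iV)=\Phi^i(\phi^{-i}(f)V)=\Phi^iV$ for $f\in F$. Moreover $\Phi^i V$ is simple because $\Phi^i$ intertwines the $D_t$-action through the ring automorphism noted above. Then $\sum_{i\in\mathbb{Z}}\Phi^iV$ is both $D_t$-stable and $\Phi$-stable, hence a nonzero $D$-submodule of the irreducible module $M$, and therefore equals $M$. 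Thus $M\pda^1_t=\sum_{i\in\mathbb{Z}}\Phi^iV$ is a sum of simple $D_t$-modules, hence semisimple. This step uses no hypothesis on the characteristic.

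For the induction $N\pua^1_t=D\otimes_{D_t}N$ I would first restrict back: by \eqref{eq:basic:3} we have $N\pua^1_t\pda^1_t\cong\bigoplus_{i=0}^{t-1}\Phi^i\otimes N$, and each summand is a conjugate of the irreducible module $N$, hence simple, so $N\pua^1_t\pda^1_t$ is semisimple. It then remains to establish a transfer lemma: if a $D$-module $W$ has semisimple restriction $W\pda^1_t$, then $W$ itself is semisimple. Given a $D$-submodule $U\subseteq W$, semisimplicity of $W\pda^1_t$ furnishes a $D_t$-linear projection $\pi\colon W\to U$ with $\pi|_U=\mathrm{id}_U$, which I would average to $\tilde\pi=\tfrac1t\sum_{i=0}^{t-1}\Phi^i\pi\Phi^{-i}$, where $\tfrac1t\in C$ makes sense because $C$ has characteristic $0$. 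Each $\Phi^i\pi\Phi^{-i}$ is a $D_t$-linear projection onto $U$ (using that $\pi$ commutes with $\Phi^t\in D_t$ and that $U$ is $\Phi$-stable), and the cyclic reindexing $i\mapsto i+1$ together with the periodicity $\Phi^t\pi\Phi^{-t}=\pi$ shows that $\tilde\pi$ commutes with $\Phi$, so $\tilde\pi$ is $D$-linear. Hence $W=U\oplus\ker\tilde\pi$; since every $D$-submodule of $W$ splits, $W$ is semisimple, and applying this to $W=N\pua^1_t$ finishes the argument. The main obstacle is precisely this transfer lemma, and within it the verification that the averaged map $\tilde\pi$ is genuinely $D$-linear rather than merely $D_t$-linear; this is the one place where characteristic $0$ is essential, since it requires inverting the index $t$, whereas the restriction case is formal and characteristic-free.
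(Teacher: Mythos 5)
Your proposal is correct and follows essentially the same route as the paper: for restriction, the Clifford-style argument that the orbit sum $\sum_i \Phi^i V$ of a simple $D_t$-submodule is $D$-stable and hence all of $M$; for induction, restricting to get a $D_t$-linear projection and then averaging it by $\tfrac{1}{t}\sum_{i}\Phi^{i}(\cdot)\Phi^{-i}$ (using characteristic $0$) to make it $D$-linear, exactly as in the paper's construction of $\tilde\alpha$. The only cosmetic difference is that you package the induction step as a general transfer lemma (semisimple restriction implies semisimple module), which the paper applies directly to $N\pua^{s}_{t}$ without stating separately.
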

\begin{proof}
(It would be sufficient to only consider $s=1$ by \Cref{remark:basic-logic}.)
    To show $M\pda^{s}_{t}$ is semisimple, let $P$ be an irreducible $D_{t}$-submodule.
    Since $M$ is irreducible, the $D_s$-submodule $P+\Phi^s(P)+\Phi^{2s}(P)+\dots+\Phi^{t-s} (P)$ equals $M$. But this writes $M\pda^{s}_{t}$ as a sum of irreducible $D_t$-submodules $\Phi^{is}(P)$.

    We next show $N\pua^{s}_{t}$ is semisimple. Let $Q$ be a $D_s$-submodule of $N\pua^{s}_{t}$. Since $N\pua^{s}_{t}\pda^{s}_{t} = \bigoplus_{i}\Phi^{is}\otimes N$ is semisimple, there exists a projection
    $\alpha\colon N\pua^{s}_{t}\pda^{s}_{t}\twoheadrightarrow Q\pda^{s}_{t}$
    of $D_{t}$-modules.
    The map
    \[
    \tilde{\alpha}\colon N\pua^{s}_{t} \twoheadrightarrow N\pua^{s}_{t}\colon \quad
    n\mapsto \frac{s}{t}\sum_{i=0}^{\frac{t}{s}-1}\left(\Phi^{-is}\circ \alpha \circ \Phi^{is}\right) (n)
    \]
    is a $D_s$-module homomorphism. Moreover $\tilde\alpha$ is a projection with image $Q$. This gives the $D_s$-module decomposition $Q\oplus \ker(\tilde\alpha)$ of $N\pua^{s}_{t}$.
\end{proof}

The above contains a weak form of the  analog of Clifford's theorem \cite[Theorem 23.1]{lassueur2020modular}. Here is part of the strong form \cite[Theorem 23.3]{lassueur2020modular}:

\begin{proposition}
\label{prop:clifford}
    Let $M\pda^{s}_{t}=M_{1}^{n_{1}}\oplus\dots\oplus M_{r}^{n_{r}}$ be the decomposition of an irreducible $D_s$-module $M$ into non-isomorphic, nonzero $D_{t}$-modules $M_{i}$. Then $\Phi^s$ permutes the $M_{i}^{n_{i}}$ transitively. In particular, $n_{1}=\cdots=n_{r}$ and $\dim_{F} M_{1}=\cdots=\dim_{F} M_{r}$.
\end{proposition}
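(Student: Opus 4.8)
The plan is to exploit the $\Phi^s$-action on $M\pda^s_t$ arising from the fact that $M$ is a $D_s$-module while the decomposition lives in the category of $D_t$-modules. The key observation is that $\Phi^s$ does not lie in $D_t$ (since $s$ properly divides $t$ in the relevant cases), so conjugation by $\Phi^s$ is \emph{not} a $D_t$-module endomorphism of $M$, but it does carry $D_t$-submodules to $D_t$-submodules. First I would make this precise: for any $D_t$-submodule $P\subseteq M\pda^s_t$, the subspace $\Phi^s(P)$ is again a $D_t$-submodule, because for $f\in F$ and the generator $\Phi^t$ of $D_t$ we have $\Phi^t\circ\Phi^s=\Phi^s\circ\Phi^t$ and $\Phi^s$ intertwines the twisted scalar actions appropriately. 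Thus $\Phi^s$ induces a permutation of the set of $D_t$-submodules of $M$, and in particular sends each isotypic component $M_i^{n_i}$ to some isotypic component, i.e. it permutes the homogeneous pieces among themselves.

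Next I would establish transitivity. Consider the orbit of $M_1^{n_1}$ under the cyclic action generated by $\Phi^s$, and let $W$ be the sum of the homogeneous components in that orbit. Since $\Phi^s$ permutes these components, $W$ is stable under $\Phi^s$; it is also stable under $D_t$ because each $M_i^{n_i}$ is a $D_t$-submodule and a sum of $D_t$-submodules is one. But $D_s$ is generated as a ring by $F$, $\Phi^s$, $\Phi^{-s}$ together with $D_t$ (indeed $D_t\subseteq D_s$ and $\Phi^{\pm s}$ generate the rest), so $W$ being stable under both $D_t$ and $\Phi^{\pm s}$ forces $W$ to be a $D_s$-submodule of $M$. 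Since $M$ is irreducible and $W\neq 0$, we conclude $W=M$, which means the orbit exhausts all of the $M_i^{n_i}$. Hence $\Phi^s$ permutes the $r$ homogeneous components transitively.

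The final numerical consequences then follow formally. Because $\Phi^s$ is an $F$-semilinear bijection of $M$ carrying $M_i^{n_i}$ isomorphically (as an additive group, and compatibly with the twisted $D_t$-structure) onto $M_{\sigma(i)}^{n_{\sigma(i)}}$ for the induced permutation $\sigma$, each such map identifies the $D_t$-module $M_i^{n_i}$ with $M_{\sigma(i)}^{n_{\sigma(i)}}$ up to the twist by $\Phi^s$; in particular it matches multiplicities and total $F$-dimensions. Transitivity of $\sigma$ on $\{1,\dots,r\}$ then immediately gives $n_1=\cdots=n_r$ and $\dim_F M_1=\cdots=\dim_F M_r$.

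The main obstacle I anticipate is the first step: verifying cleanly that $\Phi^s$ sends $D_t$-submodules to $D_t$-submodules and, more delicately, that it sends the isotypic component of $M_i$ to a \emph{single} isotypic component rather than mixing types. The latter requires checking that $\Phi^s$ carries a submodule isomorphic to $M_i$ to one isomorphic to a fixed $D_t$-module (the $\Phi^s$-twist of $M_i$), so that isomorphism type is preserved under the twist; this is where the $F$-semilinearity of $\Phi^s$ interacts with the $D_t$-module structure and must be handled with care. Once this compatibility is in place, the transitivity and counting arguments are routine.
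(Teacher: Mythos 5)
Your proposal is correct and follows essentially the same route as the paper: $\Phi^s$ permutes the isotypic components (the paper invokes uniqueness of the isotypic decomposition, which is exactly your twist-preservation argument), transitivity follows because the sum of an orbit is a nonzero $D_s$-submodule of the irreducible module $M$, and the equalities of multiplicities and dimensions follow from $\Phi^s(M_i^{n_i})=[\Phi^s(M_i)]^{n_i}$ together with transitivity. Your write-up merely spells out details (semilinearity, stability of the orbit sum under $\Phi^{\pm s}$) that the paper leaves implicit, and the ``obstacle'' you flag at the end is precisely what your own twist argument already resolves.
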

\begin{proof}
    The group $\langle\Phi^s\rangle=\{\dots,\Phi^{-s},1,\Phi^s,\Phi^{2s},\dots\}$ permutes the $M_{i}^{n_{i}}$ by the uniqueness of the isotypic decomposition. 
    This action is transitive because $M$ is irreducible.
    Thus $\dim_{F} M_{i}^{n_{i}}=n_{i}\cdot\dim_{F} M_{i}$ are all equal.
    Since $\Phi^s(M_i^{n_i}) = [\Phi^s( M_i)]^{n_i}$, $\langle\Phi^s\rangle$ also permutes the isomorphism classes of the $M_{i}$ transitively. So $\dim_{F} M_{i}$ are all equal.
\end{proof}
Here is an analog of \cite[Section 7.4]{serre1977linear}.

\begin{theorem}[Mackey's irreducibility criterion]
\label{thm:mackey-irreducibility-criterion}
    Let $N$ be an irreducible $D_t$-module. Then $N\pua^{1}_{t}$ is irreducible if and only if $\Phi^i\otimes N$ is not isomorphic to $N$ as $D_t$-modules for each $i=1,2,\dots,t-1$. 
\end{theorem}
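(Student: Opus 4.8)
The plan is to prove both directions via the structure of $\Hom$-spaces, following the classical representation-theoretic template (Serre §7.4) adapted to the difference setting using the tools already established in the excerpt.

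First I would establish the contrapositive of the ``only if'' direction, which is the easy half. Suppose $\Phi^i \otimes N \cong N$ as $D_t$-modules for some $1 \le i \le t-1$. The restriction formula \eqref{eq:basic:3} gives the $D_t$-module decomposition $N\pua^1_t\pda^1_t \cong \bigoplus_{j=0}^{t-1}(\Phi^j \otimes N)$. If two of these summands, say $\Phi^0 \otimes N$ and $\Phi^i \otimes N$, are isomorphic, then this restricted module is not multiplicity-free, so by \Cref{prop:clifford} the induced module $N\pua^1_t$ cannot be irreducible: indeed, if $N\pua^1_t$ were irreducible, \Cref{prop:clifford} would force its restriction to decompose as $M_1^{n_1}\oplus\cdots\oplus M_r^{n_r}$ with $\Phi$ permuting the blocks transitively, but having the summand $N$ appear with multiplicity while the transitive $\langle\Phi\rangle$-action produces $t$ distinct conjugates $\Phi^j\otimes N$ is contradictory once some conjugates coincide and others do not. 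More directly, I expect to exhibit a nontrivial $D_1$-endomorphism of $N\pua^1_t$ from the isomorphism $\Phi^i\otimes N\cong N$, forcing $\dim_C\End_{D_1}(N\pua^1_t) > 1$ and hence reducibility by Schur's lemma.

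For the ``if'' direction — assuming all the $\Phi^i \otimes N$ for $1 \le i \le t-1$ are pairwise non-isomorphic to $N$ (and hence, by applying powers of $\Phi$, pairwise non-isomorphic to each other) — I would show $N\pua^1_t$ is irreducible by computing $\End_{D_1}(N\pua^1_t)$ and showing it is $1$-dimensional over $C$, which suffices since $N\pua^1_t$ is semisimple by \Cref{prop:semisimple}. The key computational step is a Frobenius-reciprocity / Mackey-type identity:
\[
\End_{D_1}(N\pua^1_t) = \Hom_{D_1}(N\pua^1_t, N\pua^1_t) \cong \Hom_{D_t}(N, N\pua^1_t\pda^1_t) \cong \bigoplus_{j=0}^{t-1}\Hom_{D_t}(N, \Phi^j\otimes N),
\]
where the middle isomorphism is the adjunction between induction and restriction (induction $D_1\otimes_{D_t}(-)$ is left adjoint to restriction) and the last uses \eqref{eq:basic:3}. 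Since $N$ is irreducible, Schur's lemma makes each summand $\Hom_{D_t}(N,\Phi^j\otimes N)$ either $0$ or $1$-dimensional, and it is nonzero exactly when $\Phi^j\otimes N\cong N$. Under the hypothesis this happens only for $j=0$, so the total dimension is $1$, giving irreducibility.

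The main obstacle I anticipate is verifying the adjunction isomorphism $\Hom_{D_1}(D_1\otimes_{D_t}N,\,W)\cong\Hom_{D_t}(N,\,W\pda^1_t)$ carefully in this noncommutative difference-ring setting, and confirming that it respects the $D_t$-module structures correctly — in particular that the identification of $\End_{D_1}(N\pua^1_t)$ with a sum of conjugate $\Hom$-spaces is compatible with the grading from \eqref{eq:basic:3}. Here one must check that a $D_1$-endomorphism is determined by where it sends the subspace $1\otimes N$ and that its components into each $\Phi^j\otimes N$ are genuinely $D_t$-linear maps $N\to\Phi^j\otimes N$. Since $D_1$ is free as a right $D_t$-module with basis $\{1,\Phi,\dots,\Phi^{t-1}\}$, tensor-hom adjunction applies formally, but I would spell out the $\Phi$-equivariance to ensure no twist is dropped. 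Once this bookkeeping is settled, both directions follow cleanly from Schur's lemma and semisimplicity.
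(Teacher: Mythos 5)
Your proposal is, at its working core, the same proof as the paper's: the paper makes the single computation
\[
\End_{D}(N\pua^{1}_{t}) \;\cong\; \Hom_{D_t}\bigl(N,\,N\pua^{1}_{t}\pda^{1}_{t}\bigr)\;=\;\bigoplus_{i=0}^{t-1}\Hom_{D_t}(N,\Phi^i\otimes N)
\]
via tensor-hom adjunction and \eqref{eq:basic:3}, and then reads off \emph{both} directions simultaneously from Schur's lemma together with the semisimplicity of $N\pua^{1}_{t}$ (\Cref{prop:semisimple}); your ``if'' direction and your ``more direct'' fallback for the ``only if'' direction are exactly this argument, just split into two passes. One genuine caution: the first argument you sketch for the ``only if'' direction, via \Cref{prop:clifford}, does not work. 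Clifford's theorem allows isotypic components of multiplicity greater than one; if some conjugates $\Phi^j\otimes N$ become isomorphic, the isotypic blocks simply merge into fewer blocks of equal multiplicity, still permuted transitively by $\Phi$, so no contradiction with \Cref{prop:clifford} arises. You should delete that reasoning and keep only the endomorphism count (the isomorphism $\Phi^i\otimes N\cong N$ contributes a second dimension to $\End_{D}(N\pua^{1}_{t})$ under the displayed identification), which is precisely what the paper does.
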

\begin{proof}
We first reformulate both sides of the claim. Since $N\pua^{1}_{t}$ is semisimple by \Cref{prop:semisimple}, it is irreducible precisely when $\End_D(N\pua^{1}_{t}) = C$. And since $N$ and $\Phi^i\otimes N$ are irreducible $D_t$-modules, $\Phi^{i}\otimes N$ is not isomorphic to $N$ precisely when $\Hom_{D_t}(N,\Phi^i\otimes N)=0$ (Schur's lemma).
This reduces the theorem to showing that 
$\End_D(N\pua^1_t)=C$ if and only if $\Hom_{D_t}(N,\Phi^i\otimes N)=0$ for $i=1,2,\dots,t-1$.

Tensor-hom adjunction (or Frobenius reciprocity) gives
    \begin{equation*}
        \begin{split}
            \End_{D}(N\pua^{1}_{t}) 
            & \cong \Hom_{D_t}(N,N\pua^{1}_{t}\pda^{1}_{t}) 
            = \bigoplus_{i=0}^{t-1} \Hom_{D_t}(N,\Phi^i\otimes N).
        \end{split}
    \end{equation*}

The $i=0$ summand on the right side is $\Hom_{D_t}(N,N)=C$, and thus $\End_D(N\pua^1_t)=C$ if and only if $\Hom_{D_t}(N,\Phi^i\otimes N)=0$ for each $i=1,2,\dots,n$.

\end{proof}

The above describes (ir)reducibility of induced modules. For restricted modules, we have the following theorem.

\begin{theorem}
\label{thm:absolute-irreducibility}
If a $D$-module $M$ is irreducible and $M\pda^{1}_{t}$ is reducible,
then $M \cong N\pua^{1}_{s}$ for some divisor $s>1$ of $\gcd(t,\dim_FM)$ and some irreducible $D_{s}$-module $N$.
\end{theorem}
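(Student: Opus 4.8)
The plan is to run a difference-module analogue of Clifford theory. Since $M$ is irreducible, $M\pda^1_t$ is semisimple by \Cref{prop:semisimple}, so I would write its isotypic decomposition $M\pda^1_t=\mathcal{M}_1\oplus\cdots\oplus\mathcal{M}_r$ with $\mathcal{M}_i\cong M_i^{\,n}$ for pairwise non-isomorphic irreducible $D_t$-modules $M_i$ and a common multiplicity $n$; by \Cref{prop:clifford} the cyclic group $\langle\Phi\rangle$ permutes the $\mathcal{M}_i$ transitively. Because $\Phi^t\in D_t$ fixes every $D_t$-submodule, this permutation factors through $\langle\Phi\rangle/\langle\Phi^t\rangle\cong\mathbb{Z}/t$, and a transitive action of a cyclic group on $r$ points forces $r\mid t$ with the stabilizer of $\mathcal{M}_1$ equal to $\langle\Phi^r\rangle$. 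In particular $\Phi^r(\mathcal{M}_1)=\mathcal{M}_1$, so $\mathcal{M}_1$ is a $D_r$-submodule of $M\pda^1_r$, and $\langle\Phi\rangle$ acts on the $r$ components as a single $r$-cycle.

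The heart of the argument is to show $n=1$, and I expect this to be the main obstacle, since it is where the representation theory is genuinely used. I would study the endomorphism algebra $A\coloneqq\End_{D_t}(M\pda^1_t)$. Conjugation $\tau\colon\psi\mapsto\Phi\,\psi\,\Phi^{-1}$ is an automorphism of $A$, and an element of $A$ is $D_1$-linear exactly when it also commutes with $\Phi$, so $A^{\tau}=\End_{D_1}(M)$. By Schur's lemma together with the solution-space correspondence \Cref{prop:solution-space-correspondence}, the endomorphism division rings attached to the irreducible modules in sight are finite-dimensional over the algebraically closed field $C$, hence equal $C$; thus $\End_{D_1}(M)=C$ and $A\cong M_n(C)^{\,r}$ with $\tau$ cyclically permuting the $r$ matrix blocks. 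Writing $\rho\coloneqq\tau^{r}$ restricted to a single block gives $A^{\tau}\cong M_n(C)^{\rho}$. Since $\rho$ has finite order, Skolem--Noether makes it inner, so $M_n(C)^{\rho}$ is the centralizer of a matrix and therefore has $C$-dimension at least $n$. As this must equal $C$, we get $n=1$.

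Once $n=1$, the decomposition $M\pda^1_t=M_1\oplus\cdots\oplus M_r$ is multiplicity-free and $\{\Phi^i(M_1)\}_{i=0}^{r-1}$ is exactly the list $\{\mathcal{M}_1,\dots,\mathcal{M}_r\}$. Every $D_r$-submodule of $M_1$ is a $D_t$-submodule (as $D_t\subseteq D_r$ when $r\mid t$), so $M_1$ is irreducible over $D_r$. The canonical $D$-module map $M_1\pua^1_r=D\otimes_{D_r}M_1\to M$ has image $\sum_{i=0}^{r-1}\Phi^i(M_1)=M$, and since $\dim_F(M_1\pua^1_r)=r\dim_F M_1=\dim_F M$ this surjection is an isomorphism. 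Setting $N\coloneqq M_1$ and $s\coloneqq r$ gives $M\cong N\pua^1_s$ with $N$ irreducible. Finally $s=r\mid t$, while $\dim_F M=r\dim_F M_1=s\dim_F N$ shows $s\mid\dim_F M$, so $s\mid\gcd(t,\dim_F M)$; and reducibility of $M\pda^1_t$ means $r\ge 2$, i.e.\ $s>1$, which completes the proof.
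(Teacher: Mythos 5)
Your proof is correct, but it takes a genuinely different route from the paper's. The paper never touches multiplicities: it picks a \emph{minimal} divisor $s>1$ of $t$ with $M\pda^{1}_{s}$ reducible, takes an irreducible $D_s$-submodule $N\subsetneq M\pda^{1}_{s}$, notes that $N\pua^{1}_{s}\to M$ is onto by irreducibility of $M$, and proves $N\pua^{1}_{s}$ is irreducible via Mackey's criterion (\Cref{thm:mackey-irreducibility-criterion}): a hypothetical isomorphism $N\cong\Phi^{r}(N)$ with $r\mid s$, $0<r<s$, is iterated around the cycle, rescaled so the composite is $1$, and summed into a map $\Psi\colon N\to M$ whose image is a proper $D_r$-submodule of $M\pda^{1}_{r}$, contradicting minimality of $s$. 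You instead run Clifford theory at level $t$: the isotypic decomposition and transitivity of the $\langle\Phi\rangle$-action (\Cref{prop:semisimple}, \Cref{prop:clifford}), the correct observation that the action factors through $\mathbb{Z}/t$ so the stabilizer of a component is $\langle\Phi^{r}\rangle$ with $r\mid t$, and then the multiplicity-one step via $\End_{D_t}(M\pda^{1}_{t})\cong \mathrm{M}_n(C)^{r}$ with $\Phi$-conjugation permuting the blocks in an $r$-cycle, Skolem--Noether, and the fact that a matrix centralizer in $\mathrm{M}_n(C)$ has $C$-dimension at least $n$, forcing $n=1$ since the fixed algebra is $\End_D(M)=C$. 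Your argument proves strictly more: $M\pda^{1}_{t}$ is multiplicity-free, and $s$ is identified concretely as the number of isotypic components; the paper's argument is more elementary (no Artin--Wedderburn or Skolem--Noether, just Mackey plus an explicit averaging construction) and produces the minimal $s$ with reducible restriction. Two minor points. Both proofs rest on Schur's lemma in the strong form $\End_{D_t}(\text{irreducible})=C$; your justification via \Cref{prop:solution-space-correspondence} is loose (that proposition concerns quotient modules versus invariant subspaces and presupposes a cyclic vector and a total Picard--Vessiot ring), whereas a clean argument is that any $D_t$-endomorphism has characteristic polynomial with $\phi^{t}$-invariant coefficients, and the $\phi^{t}$-constants of $F$ coincide with $C$ since they are algebraic over the algebraically closed field $C$; a division ring algebraic over its algebraically closed central subfield $C$ equals $C$. (The paper itself invokes $\End_{D_s}(N)=C$ without proof, so this is not a gap peculiar to your write-up.) Also, Skolem--Noether requires no finite-order hypothesis---every $C$-algebra automorphism of $\mathrm{M}_n(C)$ is inner---so that remark is unnecessary, though harmless.
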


\begin{proof}
Pick a minimal divisor $s>1$ of $t$ for which $M\pda^{1}_{s}$ is reducible and 
let $N\subsetneq M\pda^{1}_{s}$ be an irreducible $D_{s}$-submodule.
Since $M$ is irreducible, the map 
\[
    N\pua^{1}_{s}\to M\colon \quad \Phi^{i}\otimes n\mapsto \Phi^{i}(n)
\]
is surjective. 
To finish the proof, it suffices to show that $N\pua^{1}_{s}$ is irreducible, for then the above map is an isomorphism. It also follows that $s$ divides $\dim_{F}M$ since $\dim_{F}N\pua^{1}_{s}=s\cdot\dim_{F}N$.

Suppose $N\pua^{1}_{s}$ is reducible. 
By \Cref{thm:mackey-irreducibility-criterion}, 
we have $N\cong \Phi^{i}(N)$ as $D_{s}$-modules for some $0 < i < s$. Using $\Phi^s(N)=N$ and iterating the isomorphism $N\cong \Phi^{i}(N) \cong \Phi^{2i}(N)\cong \cdots$ give a $D_{s}$-module isomorphism 
\[
    \psi_{0}\colon N\to \Phi^{r}(N)
\]
for some $r$ dividing $s$ with $0<r<s$. 
Let 
\[
    \psi_i=\Phi^{ir}\circ\psi_0\circ\Phi^{-ir} \colon \Phi^{ir}(N) \rightarrow \Phi^{(i+1)r}(N).
\]
Then $c\coloneqq\psi_{s/r-1}\circ\ldots\circ\psi_{1}\circ\psi_0$ is in $\End_{D_{s}}(N) = C$. 
After rescaling each $\psi_i$ by ${c}^{-r/s}$, we may assume $c=1$. We further define 
\[
    \Psi_i = \psi_{i-1}\circ\cdots\circ\psi_0 \colon N \to \Phi^{ir}(N)
\]
and
\[
\Psi\colon N \to M\colon \quad n\mapsto \sum_{i=0}^{s/r-1} \Psi_i(n).
\]
Since $\Psi_1 =\psi_0\colon N\to \Phi^{r}(N)$ is an isomorphism, 
if $n\in N$ then $\Phi^r(n)=\Psi_1(n')$
for some $n'\in N$. 
From $\Psi_{s/r}=c=1=\Psi_{0}$ we get
\[
    \Phi^r\left(\Psi(n)\right) =  \sum_{i=0}^{s/r-1} \Phi^r \Psi_i(n) = \sum_{i=0}^{s/r-1} \Psi_{i+1}(n') = \Psi(n')
\]
which shows that $\Psi(N) \subsetneq M\pda^1_r$ is a $D_r$-module. 
Thus $M\pda^1_r$ is reducible, contradicting the minimality of $s$.
\end{proof}

Repeated use of \Cref{thm:absolute-irreducibility} implies that an irreducible $D$-module is induced from an absolutely irreducible module.
\cite{bou2024solving} uses the corollary below to detect absolute irreducibility.

\begin{corollary}
\label{cor:absolute-irreducibility}
    If a $D$-module $M$ is irreducible and $M\pda^{1}_{t}$ is reducible,
    then $M\pda^1_p$ is reducible for some prime divisor $p$ of $\gcd(t,\dim_{F} M)$.
\end{corollary}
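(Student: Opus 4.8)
The plan is to derive \Cref{cor:absolute-irreducibility} as a direct consequence of \Cref{thm:absolute-irreducibility}, which already delivers a divisor $s>1$ of $\gcd(t,\dim_F M)$ such that $M\cong N\pua^1_s$ for some irreducible $D_s$-module $N$. The key observation is that once $M$ is induced from level $s$, it must be reducible upon restriction to \emph{any} intermediate level between $1$ and $s$ — in particular at any prime dividing $s$. So the work reduces to picking a prime divisor $p$ of $s$ and showing $M\pda^1_p$ is reducible.

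First I would invoke \Cref{thm:absolute-irreducibility} to write $M\cong N\pua^1_s$ with $s>1$ dividing $\gcd(t,\dim_F M)$ and $N$ an irreducible $D_s$-module. Let $p$ be any prime divisor of $s$; since $p\mid s$ and $s\mid\gcd(t,\dim_F M)$, we have $p\mid\gcd(t,\dim_F M)$ as required. It remains to show $M\pda^1_p$ is reducible. Here I would use the transitivity of induction from \eqref{eq:basic:2}: since $s=p\cdot(s/p)$, we have $N\pua^1_s = N\pua^p_s\pua^1_p$, where $N\pua^p_s$ is a $D_p$-module. Thus $M\pda^1_p\cong (N\pua^p_s\pua^1_p)\pda^1_p$, which is the restriction to $D_p$ of a module induced \emph{from} $D_p$.

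The crux is then that a module of the form $W\pua^1_p\pda^1_p$ for a $D_p$-module $W$ is always reducible when $p>1$ and $\dim_F W\ge 1$. This follows immediately from the decomposition formula in \Cref{remark:basic-properties}, namely \eqref{eq:basic:3} applied with the roles $r=1$, $s=p$:
\[
    W\pua^1_p\pda^1_p\cong (1\otimes W)\oplus(\Phi\otimes W)\oplus\cdots\oplus(\Phi^{p-1}\otimes W).
\]
Since $p>1$, this is a direct sum of at least two nonzero $D_p$-submodules, hence $M\pda^1_p$ is reducible. Setting $W=N\pua^p_s$ completes the argument.

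I expect no serious obstacle here, as the corollary is essentially a packaging of the theorem together with the standard restriction-of-induction decomposition. The only point requiring minor care is confirming that each summand $\Phi^i\otimes W$ is genuinely nonzero and $D_p$-stable, but both are guaranteed by the basic properties already recorded in \Cref{remark:basic-properties}. The forward implication of the theorem does all the heavy lifting; the corollary merely localizes the witnessing divisor to a prime by exploiting transitivity of induction.
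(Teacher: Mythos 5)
Your proposal is correct and follows essentially the same route as the paper: invoke \Cref{thm:absolute-irreducibility} to write $M\cong N\pua^1_s$ with $s>1$ dividing $\gcd(t,\dim_F M)$, pick a prime $p\mid s$, and decompose $M\pda^1_p\cong\bigoplus_{i=0}^{p-1}\bigl(\Phi^i\otimes N\pua^p_s\bigr)$ via \Cref{remark:basic-properties}. The only cosmetic difference is that you cite \eqref{eq:basic:2} and \eqref{eq:basic:3} separately, whereas the paper cites the combined formula already recorded in that remark.
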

\begin{proof}
    By \Cref{thm:absolute-irreducibility}, $M\cong N\pua_s^1$ for some $D_s$-module $N$ and some divisor $s>1$ of $\gcd(t,\dim_{F}M)$.
    Let $p$ be a prime divisor of $s$. By \Cref{remark:basic-properties}, $M\pda^1_p\cong N\pua^1_s\pda^1_p\cong \bigoplus_{i=0}^{p-1} \left(\Phi^{i}\otimes N\pua^p_s\right)$ is reducible.
\end{proof}

\section{Main theorem}
\label{section:main}
We now prove our main result by reducing to the essential case (\Cref{lemma:first-classification}) where the total Picard-Vessiot ring is a field. 

\begin{theorem}
\label{thm:general-classification}
Let $(F,\phi)= (C (x), x\mapsto x+1)$.
Let $M$ be a $D$-module of dimension $3$. 
If a nonzero $\mathcal{S}$-point of $M$ is $2$-expressible, then
\begin{enumerate}
    \item\label{general-case-1} $M$ is reducible over $F$, or
    \item\label{general-case-2} $M \cong N\pua^{1}_{3}$ for some $D_{3}$-module $N$, or
    \item\label{general-case-3} $M\cong S^{2}(N)\otimes P$ for some $D$-modules $N$ and $P$ of dimensions $2$ and $1$, respectively.
\end{enumerate}
\end{theorem}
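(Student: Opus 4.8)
The plan is to reduce the theorem to the case where $M$ is irreducible and its total Picard-Vessiot ring is a field, then use the structural results on induction and symmetric squares to land in one of the three cases. First I would dispose of the reducible case: if $M$ is reducible over $F$, we are immediately in Case~\eqref{general-case-1}, so assume $M$ is irreducible. The hypothesis that $M$ carries a nonzero $2$-expressible $\mathcal{S}$-point means, via \Cref{thm:eulerian-group} applied to a minimal operator of that point, that $\Gal(M)$ is Eulerian. Since $M$ is irreducible of dimension $3$ and its Galois group sits inside $\GL(V_{\mathcal{S}}(M)) \cong \GL_3(C)$, I would use the Eulerian condition to constrain the identity component $G^\circ$: the irreducible Eulerian subgroups of $\GL_3$ whose quotient tower involves only finite, $\Ga$, $\Gm$, or $\PSL_2(C)$ factors are severely limited.

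The key dichotomy is on whether $M$ is absolutely irreducible. If $M$ is \emph{not} absolutely irreducible, then by \Cref{thm:absolute-irreducibility} we have $M \cong N\pua^1_s$ for some divisor $s>1$ of $\gcd(t,3) = 3$, forcing $s=3$; this is exactly Case~\eqref{general-case-2}. If $M$ \emph{is} absolutely irreducible, then by \Cref{prop:facts}\eqref{fact-1} the group $G^\circ = \Gal(M\pda^1_t)$ must be nonsolvable (otherwise $M$ would fail to be absolutely irreducible). Combined with the Eulerian hypothesis, the only nonsolvable simple quotient permitted in the tower is $\PSL_2(C)$, so $G^\circ$ acts on the $3$-dimensional space $V_{\mathcal{S}}(M\pda^1_t)$ through the irreducible $3$-dimensional representation of $\PSL_2(C)$, which is the symmetric square of the standard $2$-dimensional representation of $\SL_2(C)$. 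This is the representation-theoretic fingerprint that $M$ should be an almost symmetric square. To extract Case~\eqref{general-case-3} concretely, I would invoke the Symmetric Square Criterion (\Cref{prop:sym-square}): the $\PSL_2(C)$-action identifies $S^2(V_{\mathcal{S}}(M))$ with a representation containing a trivial (one-dimensional invariant) subrepresentation coming from the invariant quadratic form, which by \Cref{prop:solution-space-correspondence} corresponds to a $D$-submodule of $S^2(M)$ of dimension~$1$. Then \Cref{prop:sym-square} yields $M \cong S^2(N)\otimes P$ as required.

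The main obstacle I anticipate is the reduction to the field case and the bookkeeping needed to transfer the Eulerian/nonsolvable information from $\Gal(M)$ down to the connected group $G^\circ = \Gal(M\pda^1_t)$ acting on the restricted module. The subtlety is that $M\pda^1_t$ need \emph{not} be irreducible even when $M$ is absolutely irreducible in the naming convention here—absolute irreducibility means $M\pda^1_t$ stays irreducible for all $t$, so in the absolutely irreducible branch this is fine, but one must carefully justify that the symmetric-square invariant survives at the level of $M$ itself and not merely after restriction. I would handle this by passing to the field case through \Cref{prop:facts}\eqref{fact-2}, which gives a $t$ with $\Gal(M\pda^1_t)=G^\circ$ and a genuine difference \emph{field} as total Picard-Vessiot ring; there the classification of connected irreducible Eulerian subgroups of $\GL_3(C)$ (via the $\PSL_2(C)$ symmetric-square representation) is cleanest. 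The second delicate point is confirming that no Eulerian group forces a case outside the three listed—that is, ruling out mixed or reducible-after-restriction configurations that do not descend to an induction from $D_3$; this is precisely where \Cref{cor:absolute-irreducibility} restricts the relevant primes to $p \mid \gcd(t,3)$, pinning the induction down to $s=3$ and excluding spurious intermediate cases.
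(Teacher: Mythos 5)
Your proposal follows the same overall route as the paper (reducible case; \Cref{thm:absolute-irreducibility} for the irreducible-but-not-absolutely-irreducible case; Eulerian group via \Cref{thm:eulerian-group}, restriction to the connected group via \Cref{prop:facts}\eqref{fact-2}, a $\PSL_2(C)$-type classification, and finally \Cref{prop:sym-square}), but there is a genuine gap at the step that produces the input to \Cref{prop:sym-square}. You claim the $\PSL_2(C)$-invariant quadratic form in $S^2(V_{\mathcal{S}}(M))$ ``by \Cref{prop:solution-space-correspondence} corresponds to a $D$-submodule of $S^{2}(M)$ of dimension $1$.'' That correspondence requires invariance under the \emph{full} Galois group of the module over $F$ (and, strictly speaking, it pairs invariant subspaces with quotient modules, not submodules), whereas the line you exhibit is only known to be invariant under $G^{\circ}=\Gal(M\pda^{1}_{t})$, since the entire $\PSL_2$ analysis takes place after restriction. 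Your proposed repair---passing to the field case via \Cref{prop:facts}\eqref{fact-2}---does not close this gap: it yields a $1$-dimensional $D_{t}$-submodule of $S^{2}(M)\pda^{1}_{t}=S^{2}(M\pda^{1}_{t})$, but \Cref{prop:sym-square} needs a $1$-dimensional $D$-submodule of $S^{2}(M)$, and a $D_{t}$-submodule need not be $\Phi$-stable. The paper supplies exactly this descent with \Cref{prop:clifford} (Clifford's theorem): write $S^{2}(M)=Q_{1}\oplus\dots\oplus Q_{r}$ with $Q_{i}$ irreducible $D$-modules (legitimate because $M$ irreducible makes $S^2(M)$ semisimple); each $Q_{i}\pda^{1}_{t}$ is a sum of irreducible $D_{t}$-modules of \emph{equal} dimension, so comparing with the $1\oplus 5$ decomposition of $S^{2}(M\pda^{1}_{t})$ from \Cref{lemma:first-classification} forces $r=2$ and $\dim_{F}Q_{1}=1$. (The same uniqueness-of-isotypic-pieces idea, one level down, is how the paper gets $G$-stability of the dimension-$1$ and dimension-$5$ pieces inside \Cref{lemma:first-classification}, Case \ref{case-d}.) Without this step, the final appeal to \Cref{prop:sym-square} is unjustified.

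A secondary under-specification: your assertion that nonsolvability plus the Eulerian condition forces $G^{\circ}$ to act through the irreducible $3$-dimensional representation of $\PSL_{2}(C)$ is not quite accurate, since $G^{\circ}$ may contain scalars (for instance, the image of $\GL_{2}(C)$ under the symmetric square is connected, Eulerian, and irreducible in $\GL_{3}(C)$, but is larger than $\PSL_2(C)$). The paper instead splits off the determinant, applies Singer's classification \cite[Proposition 4.1]{singer1985solving} to $H^{\circ}\le\SL_{3}(C)$ where $H=\ker(\det)\cap G$, and must separately eliminate the cases where $H^{\circ}$ is finite or where $V(M)$ has an $H^{\circ}$-submodule of dimension $1$ or $2$ (Cases \ref{case-a}--\ref{case-c} of \Cref{lemma:first-classification}), by showing each forces solvability and hence contradicts absolute irreducibility via \Cref{prop:facts}\eqref{fact-1}. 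This elimination, which uses normality of $H^{\circ}$ and a Clifford-type semisimplicity argument, is genuine work that your outline compresses into ``severely limited''; your predicted outcome (a unique invariant line in the symmetric square) is nonetheless the correct one.
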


We can now deduce the restatement of 
\Cref{thm:general-classification} in \Cref{section:introduction}: 
let $L$ be a $2$-solvable operator of order 3, so that \Cref{thm:general-classification} applies to $M_{L}$. If $M_{L}$ is reducible, so is $L$. 
If $M\cong N\pua^{1}_{3}$, then $L$ is gauge equivalent to some $\Phi^{3}+f$ per the discussion in \Cref{section:restriction-induction}. 
Finally suppose that $M_{L}\cong S^{2}(N)\otimes P$ and $L_{2}$ and $L_{1}$ are operators associated to $N$ and $P$, respectively. 
If $L_{2}^{\cs 2}$ has order $2$, then $L$ is reducible. 
Otherwise $L_{2}^{\cs 2}$ has order $3$ and $L$ is gauge equivalent to $L_{2}^{\cs 2}\cs L_{1}$.

\begin{proof}[Proof of \Cref{thm:general-classification}]
    If $M$ is reducible, \eqref{general-case-1} holds. 
    If $M$ is irreducible but not absolutely irreducible,
    then $M \cong N\pua^{1}_{3}$ by \Cref{thm:absolute-irreducibility}.
    
    We may now assume $M$ is absolutely irreducible.
    Since $L$ is $2$-solvable, $\Gal(L)$ is Eulerian
    by \Cref{thm:eulerian-group}. 
    By \Cref{prop:facts} \eqref{fact-2}, there exists $t\ge 1$ for which the total Picard-Vessiot ring of $M\pda^{1}_{t}$ is a difference field $(K,\phi^{t})$ and $\Gal(M\pda^{1}_{t})=\Gal(M)^{\circ}$.
    Thus $\Gal (M\pda_{t}^{1})$ is Eulerian and connected.
    
    Let $Q_{1}\oplus Q_{2}\oplus\dots\oplus Q_{r}$ be an irreducible decomposition of $S^{2}(M)$. Then
    \[
        S^{2}(M\pda_{t}^{1}) = Q_{1}\pda^{1}_{t}\oplus Q_{2}\pda^{1}_{t}\oplus \cdots\oplus Q_{r}\pda^{1}_{t}.
    \]
    By \Cref{prop:clifford}, $Q_{i}\pda^{1}_{t}$ is a direct sum of irreducible $D_{t}$-modules of the same dimension. 
    If we now apply \Cref{lemma:first-classification} below to the difference module $M\pda^1_t$, we must have $r=2$, $\dim_{F} Q_{1}\pda^{1}_{t} = 1$, and $\dim_{F} Q_{2}\pda^{1}_{t} = 5$. 
    In particular, $\dim_{F} Q_{1}=1$.
    The Symmetric Square Criterion (\Cref{prop:sym-square}) then gives $M\cong S^{2}(N)\otimes P$ for some $D$-modules $N$ and $P$ of dimensions $2$ and $1$, respectively.
\end{proof}

To finish the proof of \Cref{thm:general-classification}, we need

\begin{lemma}
\label{lemma:first-classification}
    Let $(F,\phi)=(C (x),x\mapsto x+1)$. 
    Let $M$ be an absolutely irreducible $D$-module of dimension $3$ whose total Picard-Vessiot ring is a field and whose Galois group is Eulerian and connected. 
    Then $S^2(M)$ is the direct sum of two irreducible $D$-modules of dimensions $1$ and $5$, respectively.
\end{lemma}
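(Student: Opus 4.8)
The plan is to translate the statement into representation theory of the Galois group and then classify the possible groups. Since the total Picard–Vessiot ring $K$ is a field and $M$ has a cyclic vector, \Cref{prop:solution-space-correspondence}, applied to $S^{2}(M)$ (whose solutions in $K$ are the symmetric products of those of $M$, so $K$ is again a total Picard–Vessiot ring for it), identifies the $D$-submodules of the semisimple module $S^{2}(M)$ with the $G$-invariant subspaces of $V_{K}(S^{2}(M))=S^{2}(V)$, where $V\coloneqq V_{K}(M)$ is the $3$-dimensional solution space and $G\coloneqq\Gal(M)$ acts on it. Because $M$ is irreducible and $G$ is connected, $V$ is an irreducible $3$-dimensional representation of the connected Eulerian group $G$. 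Thus it suffices to show that $S^{2}(V)$ decomposes as $W_{0}\oplus W_{1}$ into irreducible $G$-subspaces of dimensions $1$ and $5$; the corresponding $D$-submodules then give the claimed decomposition of $S^{2}(M)$, noting $\dim S^{2}(M)=6=1+5$.

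First I would pin down the image $\bar{G}$ of $G$ in $\GL(V)\cong\GL_{3}(C)$, which is again connected and Eulerian (quotients of Eulerian groups are Eulerian) and acts irreducibly on $V$. Irreducibility forces $\bar{G}$ to be reductive: its unipotent radical is normal, so its fixed space is a nonzero $\bar{G}$-stable subspace, hence all of $V$, so the radical is trivial. Write $\bar{G}=Z^{\circ}\cdot H$ with central torus $Z^{\circ}$ (scalars, by Schur's lemma) and semisimple derived group $H\coloneqq[\bar{G},\bar{G}]$. Being connected, semisimple, and Eulerian, $H$ is isogenous to a product of copies of $\SL_{2}(C)$, since $\PSL_{2}(C)$ is the only almost-simple Eulerian algebraic group. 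A genuine product of two such factors would act on $C^{3}$ through an external tensor product whose factor dimensions multiply to $3$, which is impossible; so $H$ is a single copy isogenous to $\SL_{2}(C)$, acting irreducibly in dimension $3$. Hence $V$ is the unique $3$-dimensional irreducible $\SL_{2}(C)$-representation $S^{2}(\mathrm{std})$, and $H$ maps onto $\mathrm{SO}_{3}(C)\cong\PSL_{2}(C)$.

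With the group identified, the decomposition is a standard plethysm. As an $\SL_{2}(C)$-representation $V$ has highest weight $2$, and comparing the weights $\{4,2,0,0,-2,-4\}$ of $S^{2}(V)$ shows $S^{2}(V)$ is the direct sum of the trivial representation and the irreducible $5$-dimensional representation of highest weight $4$; both factor through $\PSL_{2}(C)$. The central torus $Z^{\circ}$ acts on all of $S^{2}(V)$ by a single scalar, so it preserves this decomposition and keeps each summand irreducible. Therefore $S^{2}(V)=W_{0}\oplus W_{1}$ is a decomposition into irreducible $\bar{G}$-subspaces, hence irreducible $G$-subspaces, of dimensions $1$ and $5$, which is what we need. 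I expect the \emph{main obstacle} to be the group-theoretic classification in the second paragraph, specifically using Eulerianness to exclude larger almost-simple images such as $\SL_{3}(C)$, for which $S^{2}$ of the standard representation stays irreducible and no $1+5$ splitting exists; everything else is either the solution-space dictionary or routine $\SL_{2}(C)$ weight bookkeeping.
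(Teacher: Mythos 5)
Your proof is correct in substance, but it reaches the conclusion by a genuinely different route than the paper. The paper keeps the ambient embedding $G\le \GL_3(C)$, passes to $H=\ker(\det|_G)\le\SL_3(C)$ and its identity component $H^{\circ}$, and invokes Singer's classification of Eulerian subgroups of $\SL_3(C)$ (\cite[Proposition 4.1]{singer1985solving}); the four resulting cases are then eliminated or exploited using Clifford theory (\Cref{prop:clifford}), the Galois-correspondence tower $F\subseteq K''\subseteq K'\subseteq K$, and the fact that absolute irreducibility forces $G$ to be non-solvable (the contrapositive of \Cref{prop:facts}\eqref{fact-1}); finally, $G$-stability of the two summands of $V(S^{2}(M))$ is deduced from uniqueness of the $H^{\circ}$-isotypic pieces of distinct dimensions. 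You instead classify the image $\bar G$ of $G$ in $\GL(V)$ directly by structure theory: irreducibility kills the unipotent radical, so $\bar G=Z^{\circ}\cdot H$ is reductive with $Z^{\circ}$ acting by scalars (Schur); Eulerianness of the semisimple derived group $H$ forces it to be isogenous to a product of copies of $\SL_2(C)$; the dimension count $3=3\cdot1$ plus faithfulness leaves a single factor; and the weight computation splits $S^{2}(V)$ as the trivial module plus the $5$-dimensional irreducible one, with $\bar G$-stability automatic because the central torus acts on $S^{2}(V)$ by a single scalar. What the paper's route buys is that all the group theory is outsourced to one citation; what your route buys is self-containedness and economy: you never need the determinant tower, the non-solvability step, nor (in effect) the hypothesis that the total Picard--Vessiot ring is a field, since connectedness plus irreducibility of $V$ does all the work --- consistent with \Cref{G^0-mod}, under which irreducibility and absolute irreducibility coincide when $G$ is connected.

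Three small points should be patched, none fatal. First, your assertion that $\PSL_2(C)$ is the only almost-simple Eulerian algebraic group (up to isogeny --- $\SL_2(C)$ also qualifies) is precisely the content the paper imports from \cite{singer1985solving}, so it deserves its one-line proof: in an Eulerian chain $1=G_0\triangleleft\cdots\triangleleft G_n=S$ for $S$ almost simple, the last proper term is a finite central subgroup, and the quotient of $S$ by it is connected, positive-dimensional and perfect, hence must be $\PSL_2(C)$ rather than finite, $\Ga$, or $\Gm$. Second, you should dispose of the degenerate case $H=[\bar G,\bar G]=1$: then $\bar G$ would be a torus, which, being abelian, admits no irreducible $3$-dimensional representation; relatedly, in the two-factor exclusion, ``dimensions multiply to $3$'' kills a genuine product because the dimension-$1$ factor would act through a character of a perfect group, i.e., trivially, contradicting faithfulness of $H\le\GL(V)$. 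Third, $K$ itself need not be a total Picard--Vessiot ring of $S^{2}(M)$ (that ring is a subring of $K$, and $\Gal(S^{2}(M))$ is the corresponding quotient of $G$ through which the action on $S^{2}(V)$ factors); this does not affect the lattice correspondence you need, and the paper glosses the same point with the same brevity.
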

\begin{proof}
$G\coloneqq \Gal(M/F)$ embeds into $\GL_3(C)$ because $\dim_{F} M=3$.
Since $G$ is connected, the group homomorphism 
\[
    G\hookrightarrow \GL_{3}(C)\xrightarrow{\det} \mathbb{G}_{m}
\]
has image $1$ or $\mathbb G_m$ and a kernel $H\le \SL_{3}(C)$.
Let $K$ be the total Picard-Vessiot ring of $M$ over $F$. We further let $K'= K^{H^{\circ}}$ and $K''= K^{H}$.
Then $K/F$ decomposes as
\begin{center}
    \begin{tikzcd}
    K \arrow[d,-, "H^{\circ}"'] \arrow[dd,-, "H\le \SL_3(\mathbb{C})", bend left=40] \arrow[ddd,-, "G"', bend right=70]\\
    K' \arrow[d,-, "\text{finite}"'] \\
    K'' \arrow[d,-, "1\text{ or }\mathbb{G}_m"] \\
    F
    \end{tikzcd}
\end{center}

First we show $K'/F$ is a total Picard-Vessiot extension with Galois group $G/H^{\circ}$ and $G/H^{\circ}$ is either $1$ or $\mathbb{G}_{m}$.
Since $H^{\circ}$ is characteristic in $H$ and $H\trianglelefteq G$, we have $H^{\circ}\trianglelefteq G$ and $K'/F$ is a total Picard-Vessiot extension with Galois group $G/H^{\circ}$. As a quotient of $G$, $G/H^{\circ}$ is connected. 
If $G/H=1$, then $G/H^{\circ}$ is finite and connected, hence $1$. 
If instead $G/H= \mathbb{G}_{m}$, then $G/H^{\circ}$ is connected of dimension $1$ with quotient $\mathbb{G}_{m}$. 
The only connected algebraic groups of dimension $1$ are $\mathbb{G}_{a}$ and $\mathbb{G}_{m}$ \cite[Theorem 3.4.9]{springer1998linear}, so $G/H^{\circ}= \mathbb{G}_{m}$.

Next, note that $H^\circ \le H\le SL_3(\mathbb{C})$. Since $H^\circ$ is a closed subgroup of the Eulerian group $G$, $H^\circ$ is again Eulerian. By \cite[Proposition 4.1]{singer1985solving}, the possibilities for a group like $H^{\circ}$ that is both Eulerian and in $\SL_{3}(\mathbb{C})$ are:
\begin{enumerate}[label=(\alph*)]
    \item\label{case-a} $H^{\circ}$ is finite.
    \item\label{case-b} $V(M)$ has an $H^{\circ}$-submodule of dimension $2$.
    \item\label{case-c} $V(M)$ has an $H^{\circ}$-submodule of dimension $1$.
    \item\label{case-d} $H^{\circ}\cong \PSL_{2}(C)$, and $V(M)\cong R_{2}$ as $\PSL_{2}(C)$-modules. Here $R_{2}$ is the degree $2$ homogeneous part of $C[x,y]$, equipped with the standard action of $\SL_2(C)$ on degree $1$ and extended to all degrees. Also the $\PSL_{2}(C)$-action on $V(M)$ is via the isomorphism $H^{\circ}\cong \PSL_{2}(C)$.
\end{enumerate}

Since $M$ is absolutely irreducible, the contrapositive of \Cref{prop:facts}\eqref{fact-1} shows that $G$ is not solvable.
To conclude the proof, we will show that Cases \ref{case-a}--\ref{case-c} force $H^{\circ}$ (and hence $G$) to be solvable and so cannot occur. 
We will show that Case \ref{case-d} gives our desired result.

\textbf{Case \ref{case-a}:}
We must have $H^{\circ}=1$ (solvable). This rules out Case \ref{case-a}.

\textbf{Case \ref{case-b}:}
Since $M$ is irreducible, an $H^{\circ}$-submodule $U\subset V(M)$ of dimension $2$ cannot be $G$-stable in light of \Cref{prop:solution-space-correspondence}. 
Thus there exists $g\in G$ such that $g(U)\neq U$, and $g(U)$ is an $H^{\circ}$-submodule because $H^{\circ}\trianglelefteq G$. A dimension count gives $\dim_{F}(U\cap g(U))=1$, reducing to Case \ref{case-c}.

\textbf{Case \ref{case-c}:}
Let $U\subset V(M)$ be a $H^{\circ}$-submodule of dimension $1$.
The restriction of the irreducible $G$-representation $V(M)$ to $H^{\circ}\trianglelefteq G$ is a semisimple representation $V(M)=g_1(U)\oplus g_2(U)\oplus g_3(U)$ over some $g_1,g_2,g_3\in G$. 
Since each $g_i(U)$ is $1$-dimensional, $\Gal(M/K')$ is a subgroup of
$\mathbb{G}_{m}^{3}$. But $K$ is the total Picard-Vessiot field of $M$ over $F$ (and hence over $K'$ too), so 
$$H^{\circ}=\Gal(K/K')=\Gal(M/K')\le \mathbb{G}_{m}^{3}.$$
Therefore $H^{\circ}$ is a solvable group. This rules out Case \ref{case-c}.

\textbf{Case \ref{case-d}:}
From $V(M)\cong R_{2}$, we get
\[
    V(S^{2}(M))\cong S^{2}(V(M))\cong S^{2}(R_{2}).
\] 
The $\PSL_{2}(C)$-module $S^{2}(R_{2})$ is known to be a direct sum of irreducible modules of dimensions $1$ and $5$. 
Thus $V(S^{2}(M))$ decomposes in the same way into some irreducible $H^{\circ}$-modules, say $W_1$ and $W_2$, of dimensions $1$ and $5$ respectively. Since the dimensions of $W_1$ and $W_2$ are different and since these are the \emph{only} irreducible $H^{\circ}$-submodules of $V(S^{2}(M))$, the irreducible $H^{\circ}$-submodule $g(W_i)$ must equal $W_i$ within $V(S^{2}(M))$, for each $g\in G$ and $i=1,2$. In other words, the $W_i$ are $G$-stable and $V(S^{2}(M))=W_1\oplus W_2$ is a decomposition of $G$-modules.
By \Cref{prop:solution-space-correspondence}, $S^{2}(M)$ has corresponding irreducible $D$-submodules of dimensions $1$ and $5$. This produces the desired direct sum decomposition of $S^{2}(M)$.
\end{proof}

\appendix

\section{A Gr\"obner basis computation}
\label{appendix:groebner}

We now verify a computation needed in the proof of \Cref{prop:sym-square}. Namely, we show the relations in \eqref{eq:g-a-relations} are contained in the ideal generated by the relations \eqref{eq:g-relations} and $1-\det(g_{ij})\cdot z$ (the latter relation is used to ensure that $\det(g_{ij})$ is nonzero by the Rabinowitsch trick). This verification is done using Gröbner bases in the next three steps and the Python code that follows.
\begin{enumerate}
    \item First, we compute a Gröbner basis \texttt{I\_groebner} of the ideal \texttt{I} generated by the relations \eqref{eq:g-relations} and $1-\det(g_{ij})\cdot z$.
    \item Second, we augment this Gröbner basis by the scaled relations \texttt{J} coming from \eqref{eq:g-a-relations}. We call the resulting list \texttt{IJ}. In more details, we let \texttt{G} and \texttt{A} store the entries on the left and right sides of \eqref{eq:g-a-relations}, respectively.
    We let \texttt{J} store the relations \eqref{eq:g-a-relations} after scaling by $4g_{11}g_{21}^{2}$ to clear denominators. The list \texttt{IJ} concatenates the lists \texttt{J} and \texttt{I\_groebner}.
    \item Finally, we see that the Gröbner basis \texttt{IJ\_groebner} of the augmented list \texttt{IJ} is identical to the Gröbner basis \texttt{I\_groebner} of \texttt{I} since line \texttt{17} returns \texttt{True}. Therefore the relations in \texttt{J} are already generated by the relations in \texttt{I}.
\end{enumerate}

\definecolor{bg}{rgb}{0.95,0.95,0.95}
\usemintedstyle{trac}
\begin{minted}[breaklines,bgcolor=bg,fontsize=\footnotesize,linenos]{python}
import sympy
from sympy import symbols, groebner
from sympy.matrices import Matrix

a11, a12, a21, a22, g11, g12, g13, g21, g22, g23, g31, g32, g33, z = symbols('a11 a12 a21 a22 g11 g12 g13 g21 g22 g23 g31 g32 g33 z')

G = Matrix([[g11, g12, g13], [g21, g22, g23], [g31, g32, g33]]) 
A = g11 * Matrix([[a11**2, 2*a11*a12, a12**2], [a11*a21, a11*a22+a12*a21, a12*a22], [a21**2, 2*a21*a22, a22**2]])

I = [1-G.det()*z, g21**2-g11*g31, g23**2-g13*g33, 2*g21*g22-g11*g32-g31*g12, 2*g22*g23-g12*g33-g32*g13, g22**2-g12*g32 + 2*g21*g23-g11*g33-g31*g13]
I_groebner = groebner(I, g11, g12, g13, g21, g22, g23, g31, g32, g33, z)

J = (G - A.subs([(a11, 1), (a12, g12/(2*g11)), (a21, g21/g11), (a22, g32/(2*g21))])) * 4 * g11 * g21**2
IJ = J[:][:] + I_groebner[:]
IJ_groebner = groebner(IJ, g11, g12, g13, g21, g22, g23, g31, g32, g33, z)

IJ_groebner == I_groebner # returns True
\end{minted}

\section{Mackey's formula}
\label{appendix:mackey-formula}
In this appendix, we refine the decomposition in \Cref{prop:clifford} in analogy to representation theory.

\begin{theorem}[Mackey's formula]
\label{thm:mackey-formula}
    Let $s,t\ge 1$, $d=\gcd(s,t)$, and $l=\lcm(s,t)$. Given a $D_s$-module $N$, we have a $D_t$-module isomorphism
    \begin{equation}\label{eq:mackey}
        N\pua^{1}_{s}\pda^{1}_{t}
        \cong 
        \bigoplus_{i=0}^{d-1} 
        \left( 
        \Phi^{i}\otimes N
        \right)\pda^{s}_{l}\pua^{t}_{l}.
    \end{equation}
\end{theorem}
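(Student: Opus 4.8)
The plan is to prove the isomorphism \eqref{eq:mackey} by decomposing $N\pua^{1}_{s}$ as a $D_t$-module directly, using the explicit basis of an induced module together with how $\Phi^t$ acts on its summands. Recall that as an $F$-vector space, $N\pua^{1}_{s}=D_s\otimes_{D_t}N$ has basis $\bigcup_{i=0}^{s-1}\Phi^{i}\otimes\mathcal{C}$ for a basis $\mathcal{C}$ of $N$, so $N\pua^{1}_{s}=\bigoplus_{i=0}^{s-1}(\Phi^{i}\otimes N)$ as $F$-spaces. The $D_t$-action is generated by $\Phi^{t}$, which sends $\Phi^{i}\otimes N$ into $\Phi^{i+t}\otimes N$ (reducing the exponent mod $s$ using $\Phi^{s}(n)\in N$). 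The first step is therefore to understand the orbits of the index set $\{0,1,\dots,s-1\}$ under the map $i\mapsto i+t \pmod{s}$: these orbits are precisely the residue classes modulo $d=\gcd(s,t)$, and there are exactly $d$ of them, each of size $s/d=l/t$. This is the combinatorial heart that produces the outer direct sum $\bigoplus_{i=0}^{d-1}$.

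First I would fix a representative $i\in\{0,1,\dots,d-1\}$ and isolate the $F$-subspace
\[
    W_i=\bigoplus_{j=0}^{s/d-1}\bigl(\Phi^{\,i+jt}\otimes N\bigr),
\]
where the exponents $i+jt$ are taken modulo $s$ and range over the single orbit of $i$. Since $\Phi^{t}$ permutes these summands cyclically, each $W_i$ is a $D_t$-submodule, and $N\pua^{1}_{s}\pda^{1}_{t}=\bigoplus_{i=0}^{d-1}W_i$ as $D_t$-modules. The second step is to identify each $W_i$ with the claimed factor $(\Phi^{i}\otimes N)\pda^{s}_{l}\pua^{t}_{l}$. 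Here $\Phi^{i}\otimes N$ is naturally a $D_s$-module (isomorphic to $N$ but with the twisted action), its restriction $(\Phi^{i}\otimes N)\pda^{s}_{l}$ is the same underlying space viewed as a $D_l$-module, and inducing back up along $t\mid l$ via $\pua^{t}_{l}$ yields $l/t=s/d$ copies indexed by $\Phi^{0},\Phi^{t},\dots,\Phi^{(l/t-1)t}$ over $D_l$. The natural $F$-linear map matching $\Phi^{jt}\otimes(\Phi^{i}\otimes n)$ on the right to $\Phi^{i+jt}\otimes n$ on the left is the desired $D_t$-module isomorphism.

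The main obstacle I anticipate is bookkeeping the three interacting index ranges and verifying that this candidate map genuinely intertwines the $\Phi^{t}$-actions, especially at the wrap-around step where $\Phi^{t}$ sends the top summand $\Phi^{i+(s/d-1)t}\otimes N$ back to $\Phi^{i}\otimes N$ after reducing the exponent modulo $s$. On the induced side, the corresponding cyclic closure is governed by the $D_l$-module structure on $(\Phi^{i}\otimes N)\pda^{s}_{l}$, i.e.\ by the action of $\Phi^{l}=\Phi^{(l/t)t}$, and one must check that $\Phi^{s}$ acting on $\Phi^{i}\otimes N$ matches $\Phi^{l/s\cdot s}$ so that the two cyclic structures agree; tracking the exponents modulo $s$ versus modulo $l$ consistently is where an error is most likely to creep in. Once the map is shown to be $D_t$-linear on each $W_i$ and bijective by a dimension count ($\dim_F W_i=(s/d)\dim_F N=\dim_F(\Phi^{i}\otimes N)\pda^{s}_{l}\pua^{t}_{l}$), summing over $i$ completes the proof. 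A cleaner alternative, if the direct approach proves cumbersome, is to invoke associativity of induction and restriction from \Cref{remark:basic-properties} together with \eqref{eq:basic:2} and \eqref{eq:basic:3}, expressing both sides through $D_l$ and comparing, but I expect the explicit-basis argument above to be the most transparent.
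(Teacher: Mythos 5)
Your plan is correct, and it reaches the paper's decomposition by a genuinely more hands-on route. The paper's proof is a two-line argument at the bimodule level: it establishes a $(D_t,D_s)$-bimodule isomorphism $D\cong\bigoplus_{i=0}^{d-1}D_t\otimes_{D_l}(\Phi^i\otimes D_s)$ (sending $\Phi^{jt+ks}\otimes\Phi^i$ to $\Phi^{jt}\otimes\Phi^i\otimes\Phi^{ks}$) and then simply applies the functor $-\otimes_{D_s}N$, so that well-definedness, $D_t$-linearity, and bijectivity never need to be checked by hand. You instead decompose $N\pua^1_s\pda^1_t$ directly into the $\Phi^t$-orbits $W_i$ of the summands $\Phi^i\otimes N$; your orbit combinatorics (cosets of $\langle t\rangle$ in $\mathbb{Z}/s\mathbb{Z}$ are the residue classes mod $d$, each of size $s/d=l/t$) is exactly the element-level shadow of the paper's bimodule splitting. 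What your approach buys is transparency: it makes visible where the $\gcd$ and the $D_l$-structure come from, and the wrap-around verification you single out is indeed the one nontrivial point --- after $s/d$ applications of $\Phi^t$ one lands back in $\Phi^i\otimes N$ via $\Phi^l$, and this agrees with the $D_l$-action on $(\Phi^i\otimes N)\pda^s_l$ because both are $(\Phi^s)^{l/s}$ acting through the $D_s$-structure of $\Phi^i\otimes N$; with that, basis-to-basis matching plus your dimension count gives the isomorphism. What it costs is precisely the bookkeeping the paper's functorial argument avoids. Two slips to correct when you write it up: $N\pua^1_s$ is $D\otimes_{D_s}N$, not $D_s\otimes_{D_t}N$ as you wrote; and the sentence asking whether ``$\Phi^s$ matches $\Phi^{l/s\cdot s}$'' should instead say that the return maps on the two sides are both $\Phi^l=(\Phi^s)^{l/s}$, as above.
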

\begin{proof}
    Consider the isomorphisms
    \[
        D\cong \bigoplus_{i=0}^{d-1} D_{d}\otimes \Phi^{i}
        \cong \bigoplus_{i=0}^{d-1} D_{t}\otimes_{D_{l}}(\Phi^{i}\otimes D_{s})
    \]
    where the second isomorphism sends $\Phi^{jt+ks}\otimes \Phi^{i}$ to $\Phi^{jt}\otimes \Phi^{i}\otimes \Phi^{ks}$. 
    The first and third term are isomorphic as $(D_{t},D_{s})$-bimodules.
    Therefore
    \begin{equation*}
        \begin{split}
            N\pua^{1}_{s}\pda^{1}_{t} 
            & \cong \left(D\otimes_{D_{s}}N\right)\pda^{1}_{t}
            \cong \bigoplus_{i=0}^{d-1} D_{t}\otimes_{D_{l}}\left(\Phi^{i}\otimes D_{s}\right)\otimes_{D_{s}}N\\
            & \cong \bigoplus_{i=0}^{d-1} D_{t}\otimes_{D_{l}}\left(\Phi^{i}\otimes N\right)\pda^{s}_{l}
            \cong \bigoplus_{i=0}^{d-1} 
            \left(\Phi^{i}\otimes N\right)\pda^{s}_{l}\pua^{t}_{l}.
        \end{split}
    \end{equation*}
\end{proof}

\begin{proposition}
\label{prop:mackey}
    If $N$ is absolutely irreducible, 
    then the direct summands 
    $( \Phi^{i}\otimes N)\pda^{s}_{l}\pua^{t}_{l}$
    in \eqref{eq:mackey} are irreducible and induced from the absolutely irreducible modules 
    $(\Phi^{i}\otimes N)\pda^{s}_{l}$.
\end{proposition}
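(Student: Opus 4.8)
The plan is to establish the two assertions of the proposition separately: that $(\Phi^{i}\otimes N)\pda^{s}_{l}$ is absolutely irreducible, and that its induction $(\Phi^{i}\otimes N)\pda^{s}_{l}\pua^{t}_{l}$ to $D_{t}$ is irreducible. The phrase ``induced from'' then needs no argument, as the summands in \eqref{eq:mackey} are literally of this induced shape. Throughout I will use that conjugation by $\Phi^{j}$, i.e.\ the functor $X\mapsto \Phi^{j}\otimes X$, is an autoequivalence of $D_{s}$-modules (and of $D_{l}$-modules, since $\Phi^{j}$ commutes with $\Phi^{l}$) which preserves irreducibility and commutes with the restriction $\pda^{s}_{l}$.

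For the first assertion I would first note that $\Phi^{i}\otimes N$ is again absolutely irreducible: for any multiple $u$ of $s$ one has $(\Phi^{i}\otimes N)\pda^{s}_{u}\cong\Phi^{i}\otimes(N\pda^{s}_{u})$, and $N\pda^{s}_{u}$ is irreducible by absolute irreducibility of $N$, so its twist is too. Restricting to $D_{l}$ then preserves absolute irreducibility: using $(X\pda^{s}_{l})\pda^{l}_{u}=X\pda^{s}_{u}$ from \Cref{remark:basic-properties}, the restriction of $(\Phi^{i}\otimes N)\pda^{s}_{l}$ to any multiple $u$ of $l$ equals $(\Phi^{i}\otimes N)\pda^{s}_{u}$, which is irreducible. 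Commuting past the twist also identifies $(\Phi^{i}\otimes N)\pda^{s}_{l}=\Phi^{i}\otimes(N\pda^{s}_{l})$ as an absolutely irreducible $D_{l}$-module.

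For the second assertion, write $Q=(\Phi^{i}\otimes N)\pda^{s}_{l}$, which is irreducible by the first part; since $t\mid l$, \Cref{thm:mackey-irreducibility-criterion} (in the form for $D_{t}\subseteq D_{l}$ obtained by the substitution of \Cref{remark:basic-logic}) applies: $Q\pua^{t}_{l}$ is irreducible iff $\Phi^{jt}\otimes Q\not\cong Q$ as $D_{l}$-modules for $0<j<l/t$. Because twisting commutes with restriction and composes additively in the exponent, $\Phi^{jt}\otimes\Phi^{i}\otimes N=\Phi^{i+jt}\otimes N$, so $\Phi^{jt}\otimes Q\cong(\Phi^{i+jt}\otimes N)\pda^{s}_{l}$; applying the autoequivalence $\Phi^{-i}\otimes(-)$ reduces the required non-isomorphisms to
\[
    (\Phi^{jt}\otimes N)\pda^{s}_{l}\not\cong N\pda^{s}_{l}\qquad(0<j<l/t).
\]
Here $l=\lcm(s,t)$ and $\gcd(s,t)=d$ give $l/t=s/d$, so the exponents $m=jt$ in question satisfy $0<m<l$ and, crucially, $s\nmid m$ (otherwise $s/d\mid j$, which is impossible for $0<j<s/d$).

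The heart of the proof, and the step I expect to be the main obstacle, is ruling out an isomorphism $(\Phi^{m}\otimes N)\pda^{s}_{l}\cong N\pda^{s}_{l}$ with $s\nmid m$. Writing $Q_{0}=N\pda^{s}_{l}$, such an isomorphism is precisely a twist self-symmetry $\Phi^{m}\otimes Q_{0}\cong Q_{0}$ of the irreducible $D_{l}$-module $Q_{0}$, which after the $\gcd$-iteration used in the proof of \Cref{thm:absolute-irreducibility} upgrades to $\Phi^{r}\otimes Q_{0}\cong Q_{0}$ with $r=\gcd(m,l)$, $0<r<l$. Since $Q_{0}$ already carries the $D_{s}$-action of $N$, I would combine this symmetry with that action to descend $Q_{0}$ to a $D_{g}$-module with $g=\gcd(m,s)<s$. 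The delicate point is that the semilinear operator supplied by the symmetry and the operator $\Phi^{s}$ need only commute up to a Schur scalar in $\End_{D_{l}}(Q_{0})=C$; as in the normalization ``$c=1$'' of \Cref{thm:absolute-irreducibility}, one rescales by a suitable root (using that $C$ is algebraically closed) to kill this projective factor and obtain an honest $D_{g}$-structure descending that of $N$. This forces $\Phi^{g}\otimes N\cong N$ with $0<g<s$, which by \Cref{thm:mackey-irreducibility-criterion} is exactly the failure of irreducibility of $N\pua^{1}_{s}$ — the very module whose restriction appears in \eqref{eq:mackey}. Recording the irreducibility of $N\pua^{1}_{s}$ (equivalently, the pairwise non-isomorphism of $\Phi^{0}\otimes N,\dots,\Phi^{s-1}\otimes N$) as the precise use of the absolute-irreducibility hypothesis in this setting, the non-isomorphisms follow, and Mackey's criterion yields the irreducibility of $Q\pua^{t}_{l}$, completing the proof.
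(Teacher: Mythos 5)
Your reduction via \Cref{thm:mackey-irreducibility-criterion} is correct and correctly isolates the crux: one must rule out $D_{l}$-isomorphisms $(\Phi^{jt}\otimes N)\pda^{s}_{l}\cong N\pda^{s}_{l}$ for $0<j<l/t$. The fatal step is how you discharge this at the end: you assert that absolute irreducibility of $N$ yields the irreducibility of $N\pua^{1}_{s}$, i.e.\ the pairwise non-isomorphism of $\Phi^{0}\otimes N,\dots,\Phi^{s-1}\otimes N$. That implication is false: absolute irreducibility (\Cref{def:absolute-irreducibility}) constrains only the restrictions $N\pda^{s}_{u}$ and says nothing about the induced module. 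For instance $N=D_{2}/D_{2}(\Phi^{2}-1)$ is $1$-dimensional, hence absolutely irreducible, yet $\Phi\otimes N\cong N$ and $N\pua^{1}_{2}\cong D/D(\Phi^{2}-1)$ is reducible. Worse, this same example refutes the proposition as literally stated, so no repair of your last step is possible without changing the hypotheses: take $s=2$, $t=3$ (so $d=1$, $l=6$); then $N\pda^{2}_{6}$ and $\Phi^{3}\otimes(N\pda^{2}_{6})$ are both isomorphic to $D_{6}/D_{6}(\Phi^{6}-1)$, so by \Cref{thm:mackey-irreducibility-criterion} the unique summand $(N\pda^{2}_{6})\pua^{3}_{6}$ of \eqref{eq:mackey} is \emph{reducible}; explicitly $N\pua^{1}_{2}\pda^{1}_{3}\cong D_{3}/D_{3}(\Phi^{3}-1)\oplus D_{3}/D_{3}(\Phi^{3}+1)$. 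The hypothesis the statement needs, and which your argument genuinely requires, is that $N\pua^{1}_{s}$ be irreducible; this holds in the intended application, where one restricts an irreducible $D$-module $M\cong N\pua^{1}_{s}$ coming from \Cref{thm:absolute-irreducibility}, and it is exactly the non-isomorphism of twists you invoke.

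Two points of comparison with the paper's own proof. First, the paper argues differently: it takes a nonzero irreducible $D_{t}$-submodule $P$ of the summand, restricts to $D_{l}$, claims that one of the irreducible summands of \eqref{eq:mackey-corollary} must lie inside $P\pda^{t}_{l}$, and finishes by $\Phi^{t}$-transitivity. That claim carries the same hidden assumption as your proof: an irreducible submodule of a semisimple module is only confined to an isotypic component, so the step is valid only when the summands of \eqref{eq:mackey-corollary} are pairwise non-isomorphic --- in the example above, $P=F\cdot(1\otimes n+\Phi^{3}\otimes n)$ is a diagonal submodule containing neither summand. Your write-up at least makes explicit the statement on which everything turns. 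Second, your descent normalization is flawed but repairable: if $\theta$ is the $\phi^{m}$-semilinear bijection of $N$ commuting with $\Phi^{l}$, and $\theta\Phi^{s}=c\,\Phi^{s}\theta$ with $c\in C^{*}$ by Schur's lemma, then replacing $\theta$ by $\lambda\theta$ leaves $c$ unchanged (constants are $\phi$-fixed), so $c$ cannot be ``killed by a suitable root''; the rescaling in \Cref{thm:absolute-irreducibility} kills a composite around a full cycle, not a commutator. The repair: with $g=\gcd(m,s)$, Schur's lemma gives $\theta^{s/g}=\lambda\,\Phi^{sm/g}$ for some $\lambda\in C^{*}$, forcing $c^{s/g}=1$, while commutation with $\Phi^{l}$ forces $c^{l/s}=1$; for $m=jt$ one has $\gcd(s,t)\mid g$, so $s/g$ and $l/s=t/\gcd(s,t)$ are coprime and hence $c=1$. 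Then $\theta$ itself is a $D_{s}$-isomorphism $\Phi^{m}\otimes N\cong N$, so the subgroup $\{k\in\mathbb{Z}\mid\Phi^{k}\otimes N\cong N\}$ contains $g$ with $0<g<s$, contradicting irreducibility of $N\pua^{1}_{s}$ by \Cref{thm:mackey-irreducibility-criterion}. With the corrected hypothesis and this repair, your outline becomes a complete proof.
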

\begin{proof}
    Since $N$ is absolutely irreducible, so is $\Phi^{i}\otimes N$, and by the definition of absolute irreducibility, $(\Phi^{i}\otimes N)\pda^{s}_{l}$ as well. 
    
    To see that $\tilde{N}\coloneqq (\Phi^{i}\otimes N)\pda^{s}_{l}\pua^{t}_{l}$ is irreducible, let $P$ be a nonzero irreducible $D_t$-submodule of $\tilde{N}$. We must show that $P=\tilde N$. To do this, we first consider the restriction $P\pda^t_l$, which is a $D_l$-submodule of the restriction $\tilde{N}\pda^t_l$. Using \eqref{eq:basic:3} of \Cref{remark:basic-properties} we can decompose $\tilde{N}\pda^t_l$ as
    \begin{multline}\label{eq:mackey-corollary}
    \tilde{N}\pda^t_l=(\Phi^{i}\otimes N)\pda^{s}_{l}\,\pua^{t}_{l}\,\pda^t_l
    = 
    (\Phi^{i}\otimes N)\pda^{s}_{l}
    \oplus
    (\Phi^{i+t}\otimes N)\pda^{s}_{l}\\
    \oplus
    (\Phi^{i+2t}\otimes N)\pda^{s}_{l}
    \oplus\cdots\oplus
    (\Phi^{i+(l-t)}\otimes N)\pda^{s}_{l}
    \end{multline}
    Since we just proved that each such direct summand in \eqref{eq:mackey-corollary} is absolutely irreducible (hence irreducible), at least one of these direct summands must lie within $P\pda^t_l$. But $P\pda^t_l$ is just $P$ viewed as a $D_l$-module, so $P$ will contain the same direct summand and additionally all its $\Phi^t$-translates. But $\Phi^t$ acts transitively on the direct summands of \eqref{eq:mackey-corollary}, so $P$ contains all the direct summands of \eqref{eq:mackey-corollary}. In other words, $P=\tilde{N}$, and so $\tilde{N}$ is irreducible.
\end{proof}

\section{More results on absolute irreducibility}
\label{appendix:absolute irreducibility}

The following theorem clarifies why our definition of absolute irreducibility agrees with other common usages of the term.

\begin{theorem}\label{G^0-mod} 
    Assume $(F,\phi)=(C(x),x\mapsto x+1)$.
    For $L \in D$ and $G=\Gal(L)$, the following are equivalent.
    \begin{enumerate}
        \item\label{item:G0mod-1} $V(L)$ is an irreducible $H$-module for every algebraic subgroup $H$ of $G$ with $[G:H]$ finite.
        \item\label{item:G0mod-2} $V(L)$ is an irreducible $G^\circ$-module, where $G^\circ$ is the connected component of the identity in $G$.
        \item\label{item:G0mod-3} $L$ is absolutely irreducible. 
    \end{enumerate}
\end{theorem}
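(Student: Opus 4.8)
The plan is to prove the three-way equivalence by isolating a purely group-theoretic step, $\eqref{item:G0mod-1}\Leftrightarrow\eqref{item:G0mod-2}$, from the Galois-to-module translation, $\eqref{item:G0mod-2}\Leftrightarrow\eqref{item:G0mod-3}$. Throughout I write $M=D/DL$ and let $V=V(L)$ be the solution space of $M$ over its total Picard--Vessiot ring, regarded as a $G$-module via the embedding $G\hookrightarrow\GL(V)$. For $\eqref{item:G0mod-1}\Leftrightarrow\eqref{item:G0mod-2}$ the key observation is that $G^\circ$ lies inside every finite-index closed subgroup $H\le G$: since $[G:H]<\infty$ forces $\dim H=\dim G$, the component $H^\circ$ is a closed connected subgroup of $G^\circ$ of full dimension, hence $H^\circ=G^\circ$ and $G^\circ\le H$. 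As $G^\circ$ itself has finite index, $\eqref{item:G0mod-1}$ applied to $H=G^\circ$ gives $\eqref{item:G0mod-2}$; conversely, if $V$ is $G^\circ$-irreducible then for any finite-index $H$ every $H$-invariant subspace is $G^\circ$-invariant (because $G^\circ\le H$), hence trivial, which is $\eqref{item:G0mod-1}$.

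For $\eqref{item:G0mod-2}\Leftrightarrow\eqref{item:G0mod-3}$ I would first fix the distinguished $t_0$ of \Cref{prop:facts}\eqref{fact-2}, for which $M\pda^1_{t_0}$ has total Picard--Vessiot \emph{field} $K$ and $\Gal(M\pda^1_{t_0})=G^\circ$. Using the decomposition $S=\bigoplus_{i=0}^{t_0-1}\phi^i(R)$ of the Picard--Vessiot ring of $M$ from that proof (with $K=\Frac(R)$), a $D$-homomorphism $M\to\Frac(S)$ is determined by its component in the factor $K$, where it becomes a $D_{t_0}$-homomorphism; this yields a $C$-linear identification $V\cong V_K(M\pda^1_{t_0})$ that is equivariant for $G^\circ=\Gal(M\pda^1_{t_0})$, realized as the subgroup of $G$ stabilizing the factor $K$. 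Under this identification, \Cref{prop:solution-space-correspondence} over the field $K$ gives $\eqref{item:G0mod-2}\Leftrightarrow\big(M\pda^1_{t_0}\text{ is irreducible}\big)$. It then remains to upgrade irreducibility after the single restriction to $t_0$ into genuine absolute irreducibility.

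That upgrade rests on the lemma: \emph{an irreducible difference module with connected Galois group is absolutely irreducible.} Applying it to $M\pda^1_{t_0}$ (legitimate by \Cref{remark:basic-logic}, replacing $\phi$ by $\phi^{t_0}$) shows $M\pda^1_{t_0}$ irreducible $\Rightarrow M\pda^1_{t'}$ irreducible for every multiple $t'$ of $t_0$; combined with the trivial fact that a submodule restricts to a submodule (so restricting a reducible module keeps it reducible) and the choice $t'=\lcm(t,t_0)$, this forces $M\pda^1_t$ irreducible for all $t$, i.e.\ $\eqref{item:G0mod-3}$, while $\eqref{item:G0mod-3}\Rightarrow M\pda^1_{t_0}$ irreducible is immediate. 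To prove the lemma, suppose an irreducible $P$ with $\Gal(P)$ connected were not absolutely irreducible. By \Cref{thm:absolute-irreducibility}, $P\cong N\pua^1_u$ with $u>1$ and $N$ irreducible, so $P\pda^1_u=\bigoplus_{i=0}^{u-1}\Phi^i\otimes N$, and by \Cref{thm:mackey-irreducibility-criterion} these summands are pairwise non-isomorphic. Elements of $\Gal(P)$ commute with $\Phi^u$ and hence permute the corresponding solution subspaces inside $V(P)$, giving a homomorphism $\Gal(P)\to S_u$. If this map were trivial, $V(P)$ would split as a $\Gal(P)$-stable direct sum of $u\ge 2$ nonzero pieces, forcing $P$ reducible by \Cref{prop:solution-space-correspondence}, a contradiction; so the image is a nontrivial finite group, impossible for the connected group $\Gal(P)$.

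The hard part will be the two bookkeeping points inside $\eqref{item:G0mod-2}\Leftrightarrow\eqref{item:G0mod-3}$: first, making the identification $V\cong V_K(M\pda^1_{t_0})$ rigorous and $G^\circ$-equivariant, which requires unwinding the component structure of the Picard--Vessiot ring and checking that $G^\circ$ really is the stabilizer of the factor $K$; and second, justifying in the lemma that $\Gal(P)$ genuinely permutes the solution subspaces $V(\Phi^i\otimes N)$ and that the triviality of this permutation is what contradicts irreducibility. Both steps are conceptually straightforward but demand care with the Picard--Vessiot formalism and with the interplay between the $\phi$- and $\phi^{t}$-structures.
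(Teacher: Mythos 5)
Your proof of \eqref{item:G0mod-1}$\Leftrightarrow$\eqref{item:G0mod-2} is correct (the paper cites \cite[Proposition 2.2.1]{springer1998linear} where you argue by dimension), and your reduction of \eqref{item:G0mod-2}$\Leftrightarrow$\eqref{item:G0mod-3} to the connected case is sound: the identification $V(L)\cong V_K(M\pda^1_{t_0})$ as $G^\circ$-modules can be made rigorous as you describe, and with \Cref{prop:solution-space-correspondence} over $(F,\phi^{t_0})$ it gives \eqref{item:G0mod-2}$\Leftrightarrow$($M\pda^1_{t_0}$ irreducible); this makes explicit what the paper uses tersely for \eqref{item:G0mod-3}$\Rightarrow$\eqref{item:G0mod-2}. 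The genuine gap is in your key lemma, exactly at the step you call bookkeeping. The ``solution subspaces $V(\Phi^i\otimes N)$ inside $V(P)$'' that $\Gal(P)$ is supposed to permute do not exist in any naive sense: if $s\in V(P)=\Hom_D(P,\Omega)$ vanishes on one summand $\Phi^i\otimes N$ of $P\pda^1_u$, then, since $\Phi$ maps each summand onto the next and $s(\Phi p)=\phi(s(p))$, $s$ vanishes on every summand, hence $s=0$. So there is no direct-sum decomposition of $V(P)$ indexed by the module summands, and no homomorphism $\Gal(P)\to S_u$ arises this way. The decomposition that does exist is indexed by different objects: the orthogonal idempotents $e_0,\dots,e_{u-1}$ solving $\Phi^u-1=0$, which live not in the Picard--Vessiot ring of $P$ but in the larger Picard--Vessiot ring $R$ of $\LCLM(L',\Phi^u-1)$, where $L'\in D_u$ is an operator with $M_{L'}\cong P$ (obtained from a cyclic vector of $N$). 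Since $L'\in D_u$, the space $V(L')$ is a module over the $\phi^u$-constants of $R$, giving $V(L')=\bigoplus_i V(L')e_i$; the group permuting these pieces is $\Gal(R)$, and one must then descend the action to $G$ and show the pieces are nonzero (via transitivity of $\Gal(R)$ on the $e_i$). This construction is precisely the paper's proof of \eqref{item:G0mod-1}$\Rightarrow$\eqref{item:G0mod-3}, i.e.\ the technical heart of the theorem, so your plan defers its main content rather than supplying it; note also that the pairwise non-isomorphism of the summands from \Cref{thm:mackey-irreducibility-criterion} plays no role in the repaired argument.

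If you wish to keep your architecture, the connected-case lemma admits a shorter proof that bypasses idempotents entirely: if $\Gal(P)$ is connected, then by \cite[Proposition 1.20]{put2006galois} the Picard--Vessiot ring $S$ of $P$ has no nontrivial idempotents, hence is a domain; by \cite[Lemma 1.26]{put2006galois}, as used in the proof of \Cref{prop:facts}\eqref{fact-2}, $(S,\phi^u)$ is then also a Picard--Vessiot ring of $P\pda^1_u$, with the same total quotient field, the same full solution space, and Galois group $\Gal(P)^\circ=\Gal(P)$. Reducibility of $P\pda^1_u$ would then give, via \Cref{prop:solution-space-correspondence} over $(F,\phi^u)$, a proper nonzero $\Gal(P)$-invariant subspace of $V(P)$, contradicting irreducibility of $P$. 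With that lemma, your route becomes a complete and genuinely different proof: the paper instead proves \eqref{item:G0mod-1}$\Rightarrow$\eqref{item:G0mod-3} directly for arbitrary $G$, by gauge-transforming $L$ into $D_s$ and running the idempotent argument there.
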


\begin{proof}
    We may assume that $L$ is irreducible, otherwise \eqref{item:G0mod-1}--\eqref{item:G0mod-3} are all false.
    (By \Cref{prop:solution-space-correspondence}, 
    $V(L)$ is a reducible $G$-module iff
    $L$ is reducible over $F$.)
    
    \eqref{item:G0mod-1}$\Leftrightarrow$\eqref{item:G0mod-2}:
    Clearly \eqref{item:G0mod-2} is the special case $H=G^{\circ}$ of \eqref{item:G0mod-1}. 
    Conversely, let $H$ be a finite-index, closed subgroup of $G$. By \cite[Proposition 2.2.1]{springer1998linear}, $H$ contains $G^\circ$.
    So if $V(L)$ is irreducible as a $G^{\circ}$-module, then it is also irreducible as an $H$-module.
    
    \eqref{item:G0mod-1}$\Rightarrow$\eqref{item:G0mod-3}: 
    We will prove the contrapositive of \eqref{item:G0mod-1}$\Rightarrow$\eqref{item:G0mod-3}. 
    Suppose that $L$ is not absolutely irreducible. 
    By \Cref{thm:absolute-irreducibility}, $M_L\cong N\pua^1_s$ for some $s>1$. 
    By the discussion after \Cref{remark:basic-logic}, $L$ is gauge equivalent to some $L'\in D_s$.

    Since replacing $L$ by $L'$ does not alter the truth of the implication \eqref{item:G0mod-1}$\Rightarrow$\eqref{item:G0mod-3}, we can assume that $L$ is in $D_s$.
    So now $L$ only contains powers of $\Phi^s$, and we can write
    \[ V(L) = V(L)e_0 \oplus \cdots \oplus V(L)e_{s-1}\]
    for some orthogonal idempotents $e_i$ within the Picard-Vessiot ring $R$ of $\LCLM(L, \Phi^s - 1)$, with $(\Phi^s-1)e_i=0$ for $i=0,\dots,s-1$.

    We first argue that $\Gal(R)$ permutes $\{e_0,\dots,e_{s-1}\}$.
    Since $\Phi^s-1$ has coefficients over $F$, $\Gal(R)$ leaves $V(\Phi^s-1)$ invariant.
    Since $\Gal(R)$ acts as difference ring homomorphisms on $R$, an orthogonal set of $s$-many idempotents within $V(\Phi^s-1)$ must be sent to another orthogonal set of $s$-many idempotents within $V(\Phi^s-1)$. But the only such set is $\{e_0,\dots,e_{s-1}\}$, and so $\Gal(R)$ permutes it.
    
    Thus $\Gal(R)$ also acts by permutation on $\{V(L)e_0, \dots, V(L)e_{s-1}\}$, and the stabilizer $K$ of $V(L)e_0$ is a finite-index subgroup of $\Gal(R)$. 
    Consequently, $K\cap G$ is a finite-index subgroup of $G$ that continues to stabilize $V(L)e_0$. Therefore \eqref{item:G0mod-1} fails by taking $H=K\cap G$.

    \eqref{item:G0mod-3}$\Rightarrow$\eqref{item:G0mod-2}:
    Let $M = D/DL$.
    By \Cref{prop:facts} (2), there exists $t$ for which $\Gal(M\pda^{1}_{t}) = G^{\circ}$.
    If $V(L)$ is a reducible $G^{\circ}$-module, then the same is true for $V(M\pda^{1}_{t})$. Then
    $M\pda^{1}_{t}$ is reducible, and $M$ is not absolutely irreducible.
\end{proof}

\bibliographystyle{plain}
 \newcommand{\noop}[1]{}

\end{document}